\newcommand{\calP}{\ensuremath{\mathcal{P}}}
\newcommand{\G}{\Gamma}
\newtheorem{theorem}{Theorem}[section]
\newtheorem{lemma}[theorem]{Lemma}
\newtheorem{remark}[theorem]{Remark}
\theoremstyle{definition}
\begin{document}

\title[Symmetric polytopes whose automorphism
groups are 2-groups]{Symmetric polytopes whose automorphism \\groups are 2-groups}

\author[G. Cunningham]{Gabe Cunningham}
\address{Gabe Cunningham, Applied Mathematics, School of Computing and Data Science, Wentworth Institute of Technology, Boston, MA 02115, USA}
\email{cunninghamg1@wit.edu}

\author[Y.-Q. Feng]{Yan-Quan Feng}
\address{Yan-Quan Feng, Department of Mathematics, Beijing Jiaotong University, Beijing, 100044, P.R. China}
\email{yqfeng@bjtu.edu.cn}

\author[D.-D. Hou]{Dong-Dong Hou}
\address{Dong-Dong Hou, Shanxi Key Laboratory of Cryptography and Date Security, Department of Mathematics, Shanxi Normal University, Taiyuan, 100044, P.R. China}
\email{holderhandsome@bjtu.edu.cn}

\author[E. Schulte]{\\ Egon Schulte}
\address{Egon Schulte, Department of Mathematics, Northeastern University, Boston, MA, 02115, USA}
\email{e.schulte@northeastern.edu}

\date{ \today } 
\maketitle 

\begin{abstract}
The present work investigates regular, semiregular, and chiral polytopes of any rank $d\geq 3$, whose automorphism groups are 2-groups.  There is a large variety of rather small finite regular or  alternating semiregular polytopes with automorphism groups of 2-power order:\ for such polytopes with toroidal sections of rank 3, the various sections of rank $3$ can be entirely prescribed (possibly with one exception in the semiregular case). It is also shown that having a 2-group as automorphism group is hereditary under taking universal extensions:\ the universal extension of a given regular, chiral, or alternating semiregular polytope with a finite or infinite 2-group as automorphism group, is a polytope of one rank higher with an infinite 2-group an automorphism group. 
\bigskip\medskip

\noindent
Key Words:  Regular polytope, Automorphism group, 2-Group, Semiregular, Chiral
\medskip

\noindent
MSC Subject Classification (2020): 20B25, 52B15, 51M20
\end{abstract}

\section{Introduction}
\label{intro}

Abstract polytopes have been the subject of considerable interest for the past few decades (McMullen \& Schulte~\cite{McMSch2002}, McMullen~\cite{McM2020}, Pellicer~\cite{Pel2025}). They are combinatorial structures with distinctive geometric, algebraic, or topological characteristics. A large body of literature focuses on the classification of regular, chiral, or other highly symmetric abstract polytopes, usually with a focus on the polytopes themselves. A separate line of inquiry begins with the groups, and attempts to enumerate the regular or chiral polytopes whose automorphism group is of a preassigned type; for example, a symmetric group~\cite{CFL2024}, an alternating group~\cite{CFLM2017,CHO2024}, or another almost simple group such as a projective linear group~\cite{BroLee2016,BroVic2010,LeeSch2007}.

The present work investigates regular, semiregular, and chiral polytopes whose automorphism group is a 2-group. This continues a direction of research, initiated by a problem posed in~\cite{SchWei2006} but begun in earnest by Hou, Feng, Leemans and Qu~\cite{HFL2019a,HFL2019b,HFL2020,HFL2025,HFLQ2025}, as well as Gomi, Loyola and De Las Penas~\cite{GLD2018,Loy2018} for special cases. Our interest is primarily in polytopes with finite 2-groups, but infinite 2-groups will occur as well.

Our results show that there is a large variety of rather small finite regular and semiregular polytopes with a group of 2-power order. Specifically, for $d$-polytopes with toroidal sections of rank 3 ($d\geq 4$), the various sections of rank $3$ can be entirely prescribed, except possibly the $(d-4)$-co-faces in the semiregular case:\ any combination of toroidal regular polyhedra, each with a 2-group, occurs as the collection of sections of rank 3 in a small regular or alternating semiregular polytope whose automorphism group is a 2-group.

We also establish that the free (universal) extensions of regular or chiral $d$-polytopes whose automorphism group is a finite or infinite 2-group, is a regular or chiral $(d+1)$-polytope, respectively, whose automorphism group is an infinite 2-group. A similar result also holds for the universal alternating semiregular $(d+1)$-polytope with facets given by a compatible pair of regular $d$-polytopes $\mathcal P$ and $\mathcal Q$ with finite or infinite 2-groups.

The paper is organized as follows. In Sections~\ref{bano} and~\ref{FAP}, we give the necessary background and review the flat amalgamation property for regular polytopes. Following a brief discussion of toroidal regular polyhedra with 2-groups in Section~\ref{44type}, we then exploit the flat amalgamation property to construct small regular and semiregular polytopes with prescribed toroidal sections of rank 3 and a 2-power automorphism group; this is done in Sections~\ref{regpols2} and~\ref{semiregpols2}, respectively. Section~\ref{inf2group} describes our results about polytopes with infinite 2-groups. The final Section~\ref{powerpols} briefly comments on regular power-polytopes with groups of 2-power order.

\section{Basic Notions}
\label{bano}

In this section we briefly review basic concepts and results about abstract regular polytopes, mostly following \cite[Chs. 2,\,3]{McMSch2002}. 

An (\emph{abstract\/}) \textit{polytope of rank\/} $d$ is a partially ordered set $\mathcal{P}$ with a strictly monotone rank function that takes values in $\{-1,0, \ldots, d\}$. An element of rank~$j$ is called a \textit{$j$-face\/} of $\mathcal{P}$, and a face of rank $0$, $1$ or $d-1$ is also called a \textit{vertex\/}, \textit{edge\/} or \textit{facet\/}, respectively. The polytope $\mathcal{P}$ has a unique least face $F_{-1}$ (of rank $-1$) and a unique greatest face $F_d$ (of rank $d$), called \textit{improper\/} faces of $\mathcal{P}$. A \textit{flag\/} is a maximal totally ordered subset of $\mathcal{P}$. Each flag contains exactly $d+2$ faces, one for each rank, including $F_{-1}$ and $F_{d}$. Two flags are said to be \textit{adjacent} if they differ in one face; if this face is a $j$-face, we also call the flags $j$-adjacent. A polytope $\mathcal{P}$ is \textit{strongly flag-connected}, in the sense that any two flags $\Phi$ and $\Psi$ of $\mathcal{P}$ can be joined by a finite sequence of successively adjacent flags, each containing $\Phi \cap \Psi$. Finally, $\mathcal{P}$ satisfies the \textit{diamond condition\/}:\ whenever $F \leq G$, with $F$ a $(j-1)$-face and $G$ a $(j+1)$-face for some~$j$, there are exactly two $j$-faces $H$ with $F \leq H \leq G$. The diamond condition, rephrased, is saying that every flag $\Phi$ has a unique $j$-adjacent flag for each $j=0, \dots, d-1$, denoted $\Phi^j$.  An abstract polytope of rank 3 is also called an (\emph{abstract}) \emph{polyhedron}.

For any two faces $F$ of rank $j$ and $G$ of rank $k$ with $F \leq G$, we call
\[G/F := \{ H \in \mathcal{P}\, | \, F \leq H \leq G \}\] 
a \textit{section} of $\mathcal{P}$. This is a $(k-j-1)$-polytope in its own right. We often identify a face $F$ with the section $F/F_{-1}$. For a $j$-face $F$ the section $F_{d}/F$ is called the \textit{co-$j$-face\/} (or just \textit{co-face\/}) of $\mathcal{P}$ at $F$, or the \emph{vertex-figure\/} or \emph{edge-figure\/} of $\mathcal{P}$ at $F$ if $F$ is a vertex or an edge, respectively. A co-$j$-face has rank $d-j-1$.

A polytope $\mathcal{P}$ is said to be \emph{regular\/} if its automorphism group $\Gamma(\mathcal{P})$ is transitive on the flags. In this case $\Gamma(\mathcal{P})$ acts regularly on the flags, as the flag stabilizers in the automorphism group of a polytope are necessarily trivial. If $\mathcal{P}$ is a regular polytope, then $\Gamma(\mathcal{P})$ acts transitively on the faces of each rank, all sections of $\mathcal{P}$ are also regular, and comparable sections (defined by pairs of faces of matching ranks) are isomorphic. A regular polytope is said to be of (Schl\"afli) type $\{p_1,\ldots,p_{d-1}\}$ if for each $i=1,\ldots,d-1$ and each incident pair $F,G$ of $(i-2)$-face and $(i+1)$-face, the section $G/F$ is isomorphic to a $p_i$-gon (possibly $p_{i}=\infty$). 

The group $\Gamma(\mathcal{P})$ of a regular polytope $\mathcal{P}$ has a well-behaved system of \textit{distinguished generators\/}. In fact, $\Gamma(\mathcal{P})$ is generated by involutions $\rho_0,\ldots,\rho_{d-1}$, where $\rho_i$ maps a (fixed) \textit{base\/} flag $\Phi$ to its $i$-adjacent flag $\Phi^i$. These generators satisfy (at least) the standard Coxeter-type relations for Coxeter groups with string diagrams, 
\begin{equation}
\label{standardrel}
(\rho_i \rho_j)^{p_{ij}} = 1 \;\; \textrm{ for } i,j=0, \ldots,d-1,
\end{equation}
where $p_{ii}=1$, $p_{ji} = p_{ij} =: p_{i+1}$ if $j=i+1$, and $p_{ij}=2$ otherwise. The numbers $p_j$ are the entries of the Schl\"afli symbol $\{p_{1},\ldots,p_{d-1}\}$. Moreover, the following {\em intersection property\/} holds,
\begin{equation}
\label{intprop}
\langle \rho_i \mid i \in K \rangle \cap \langle \rho_i \mid i \in J \rangle
= \langle \rho_i \mid i \in {K \cap J} \rangle
  \;\; \textrm{ for } K,J \subseteq \{0,1,\ldots,d-1\}.
\end{equation}
Further, the elements $\sigma_{1},\ldots,\sigma_{d-1}$, with $\sigma_{i}:=\rho_{i-1}\rho_{i}$ for $i=1,\ldots,d-1$, generate the {\em rotation subgroup\/} $\Gamma^{+}(\mathcal{P})$ of $\Gamma(\mathcal{P})$, which is of index at most~$2$. This subgroup consists of all elements of $\Gamma(\mathcal{P})$ that can be written as a product of an even number of generators $\rho_i$. A regular polytope $\mathcal{P}$ is {\em directly regular\/} if the index of $\Gamma^{+}(\mathcal{P})$ in $\Gamma(\mathcal{P})$ is $2$.

A \textit{string C-group\/} is a group $\Gamma = \langle \rho_0,\ldots, \rho_{d-1} \rangle$ whose generators satisfy (\ref{standardrel}) and the intersection property (\ref{intprop}); here, the ``C'' stands for ``Coxeter'', though not every string C-group is a Coxeter group. The string C-groups are precisely the automorphism groups of regular polytopes (see \cite[Ch. 2E]{McMSch2002}). We usually identify a regular polytope with its automorphism (string C-) group.

We write $[p_{1},p_{2},\ldots,p_{d-1}]$ for the Coxeter group whose underlying Coxeter diagram is a string with $d$ nodes and $d-1$ branches labeled $p_{1},p_{2},\ldots,p_{d-1}$. (Here we regard the $i$-th branch of the string as missing if $p_{i}=2$.) This is a string C-group and is the automorphism group of the \textit{universal} regular $n$-polytope $\{p_{1},\ldots,p_{d-1}\}$ (see \cite[Ch. 3D]{McMSch2002}). 

Assembling regular polytopes from polytopes of lower rank is usually a difficult problem. If a regular $d$-polytope $\mathcal{P}$ has facets isomorphic to $\mathcal{P}_1$ and vertex-figures isomorphic to $\mathcal{P}_2$, then $\mathcal{P}_1$ and $\mathcal{P}_2$ must necessarily be $(d-1)$-polytopes that are \textit{compatible}, in the sense that the vertex-figures of $\mathcal{P}_1$ must be isomorphic to the facets of $\mathcal{P}_2$. The compatibility of a pair of regular $(d-1)$-polytopes $\mathcal{P}_1$ and $\mathcal{P}_2$ is only necessary but not sufficient to guarantee the existence of a regular $d$-polytope with facets $\mathcal{P}_1$ and vertex-figures $\mathcal{P}_2$. However, if such a polytope indeed exists, then there is also a universal such polytope, denoted $\{\mathcal{P}_1,\mathcal{P}_2\}$, which covers all other regular polytopes with facets $\mathcal{P}_1$ and vertex-figures $\mathcal{P}_2$ (see \cite[Thm.\,4A2]{McMSch2002}). Generalizing the concept of the Schl\"afli symbol, we slightly abuse notation and say that a regular polytope $\mathcal{P}$ is of \textit{type\/} $\{\mathcal{P}_1,\mathcal{P}_2\}$ if $\mathcal{P}$ has facets $\mathcal{P}_1$ and vertex-figures $\mathcal{P}_2$.

We will also use a \textit{generalized Schl\"afli symbol\/} recording the structure of the sections of rank 3. Recall that if $\Phi=\{F_{-1},F_0,\ldots,F_d\}$ is a flag of a regular $d$-polytope $\mathcal P$, the sequence of sections of rank 3 determined by $\Phi$, 
\begin{equation}
\label{rk3sections}
F_3/F_{-1},\ F_4/F_0,\ F_5/F_1,\ \ldots,\ F_{d}/F_{d-4},
\end{equation}
captures the local geometry of the sections of rank 3 of $\mathcal P$, in the sense that each section $G/F$ given by a $j$-face $G$ and a $(j-4)$-face $F$ with $F\leq G$ is isomorphic to $F_j/F_{j-4}$. We then say that $\mathcal{P}$ is of type $\{\mathcal{P}_3,\ldots,\mathcal{P}_{d}\}$ if $\mathcal{P}_j$ is a regular 3-polytope isomorphic to the section $F_{j}/F_{j-4}$ of $\mathcal{P}$ for each $j=3,\ldots,d$. If $d=4$, this is consistent with our previous notation (apart from renaming the subscripts). Note that our notation is not meant to imply universality of $\mathcal{P}$ among regular polytopes with the same generalized Schl\"afli symbol.

A $d$-polytope $\mathcal{P}$ is called \emph{chiral\/} if its automorphism group $\Gamma(\mathcal{P})$ has two flag orbits such that any two adjacent flags are in distinct orbits (see \cite{Pel2025,SchWei1991}). If $\mathcal{P}$ is a chiral $d$-polytope, then $\Gamma(\mathcal{P})$ is generated by elements $\sigma_1,\ldots,\sigma_{d-1}$ where $\sigma_i$ maps a (fixed) base flag $\Phi$ of $\mathcal{P}$ to $(\Phi^{i})^{i-1}$, the $(i-1)$ adjacent flag of the $i$-adjacent flag $\Phi^i$ of $\Phi$. These generators satisfy (at least) the relations
\begin{equation}
\label{chiralrel}
\sigma_i^{p_i} = (\sigma_i\sigma_{i+1}\cdot\ldots\cdot\sigma_j)^{2} = \epsilon
\;\; \textrm{ for } i,j=1,\dots,d-1,  \textrm{ with } i<j,
\end{equation}
where as before the numbers $p_i$ determine the Schl\"afli symbol $\{p_1,\ldots,p_{d-1}\}$ of~$\mathcal{P}$. The generators of a chiral polytope also satisfy an intersection property, which is more complicated than (\ref{intprop}).

A $d$-polytope $\mathcal S$ is called {\it semiregular\/} if it has regular facets and its automorphism group $\Gamma(\mathcal{S})$ is transitive on vertices. A semiregular polytope $\mathcal S$ is {\it alternating} if its facets (all regular) are of two kinds $\mathcal P$ and $\mathcal Q$, which further occur in alternating fashion around any $(d-3)$-face in $\mathcal S$ (see \cite{MonSch2012,MonSch2020}). 
We allow the case that $\mathcal{P}$ and $\mathcal Q$ are isomorphic, but encounter this possibility  only in situations where $\Gamma(\mathcal{S})$ has a subgroup of index 2 with only two facet orbits such that the facet orbits around any $(d-3)$-face in $\mathcal S$ occur in alternating fashion. Simple examples of alternating semiregular polyhedra are the cuboctahedron and the icosidodecahedron, with two triangles and two squares respectively pentagons occurring in alternating fashion around any vertex. (Coxeter~\cite{Cox1973} calls such polyhedra ``quasiregular".) In rank 3, all facets are 2-polytopes and thus regular, so an abstract polyhedron is semiregular if and only $\Gamma(\mathcal{S})$ is transitive on vertices. 

A $d$-polytope $\mathcal{P}$ is called {\em flat\/} if each of its vertices is incident with each of its facets.  More generally, if $0 \leq k < l \leq d-1$, then $\mathcal{P}$ is said to be {\em $(k,l)$-flat\/} if each of its $k$-faces is incident with each of its $l$-faces. A flat polytope is $(0,d-1)$-flat.
\smallskip

In this paper, we study polytopes whose automorphism groups are 2-groups. We are mostly interested in the case of finite 2-groups (in which case the order is a power of 2), but infinite 2-groups will occur as well. 

\section{The FAP and flat amalgamation}
\label{FAP}

In this section, we briefly review the flat amalgamation property, or FAP for short, as well as the amalgamation process for regular polytopes described in \cite[Sects.\,4E,\,4F]{McMSch2002}.

Let $\Gamma = \langle\rho_{0},\ldots,\rho_{d-1}\rangle$ be any group generated by involutions such that $(\rho_{i}\rho_{j})^{2} = 1$ for all~$i,j$ with $|i-j| \geq 2$. We usually take $\Gamma$ to be the group of a regular polytope.  Note that if $\varphi\in \Gamma$ is such that 
$\varphi = \varphi_{0}\rho_{j_{1}}\varphi_{1}\rho_{j_{2}} \cdots 
\varphi_{m-1}\rho_{j_{m}}\varphi_{m}$, 
then we can write 
\[\varphi  = \psi_{0}(\psi_{1}^{-1}\rho_{j_{1}}\psi_{1}) \cdots
(\psi_{m}^{-1}\rho_{j_{m}}\psi_{m}),  \]
where  $\psi_{i} := \varphi_{i}\varphi_{i+1} \cdots \varphi_{m}$ for $i = 0,\ldots,m$. 
Here, if $\varphi_{0},\ldots,\varphi_{m}$ belong to some specified subgroup
of $\Gamma$, then $\psi_{0},\psi_{1},\ldots,\psi_{m}$ also belong to that subgroup.

For $k = 0,\ldots,d-1$ define
\[\begin{array}{rclcl}
N_{k}^{+} & := & N_{k,\ldots,d-1} & := & \langle\varphi^{-1}\rho_{i}\varphi \mid i \geq k,\, \varphi \in
\Gamma\rangle,  \\[.03in]
N_{k}^{-} & := & N_{0,\ldots,k} & := & \langle\varphi^{-1}\rho_{i}\varphi \mid i  \leq k,\,\varphi \in
\Gamma\rangle.
\end{array}\]
These subgroups are the normal closures of $\{\rho_{k},\ldots,\rho_{d-1}\}$ and
$\{\rho_{0},\ldots,\rho_{k}\}$, respectively, in $\Gamma =
\langle\rho_{0},\ldots,\rho_{d-1}\rangle$, and satisfy 
\begin{equation}
\label{nkplus}
\Gamma = N_{k}^{+} \langle\rho_{0},\ldots,\rho_{k-1}\rangle =
\langle\rho_{0},\ldots,\rho_{k-1}\rangle N_{k}^{+},
\end{equation}
\begin{equation}
\label{nkminus}
\Gamma = N_{k}^{-} \langle\rho_{k+1},\ldots,\rho_{d-1}\rangle =
\langle\rho_{k+1},\ldots,\rho_{d-1}\rangle N_{k}^{-}.
\end{equation}

Now let $\mathcal{P}$ be a regular $d$-polytope with automorphism group $\Gamma(\mathcal{P}) =\langle\rho_{0},\ldots,\rho_{d-1}\rangle$, and let $0 \leq k \leq d-1$.   We say that $\mathcal{P}$ has the {\em FAP with respect to its $k$-faces\/} if the above product in (\ref{nkplus}) is semidirect; that is, \[\Gamma(\mathcal{P}) \cong N_{k}^{+} \rtimes \langle\rho_{0},\ldots,\rho_{k-1}\rangle,\]
or, equivalently, the subgroups $N_{k}^{+}$ and $\langle\rho_{0},\ldots,\rho_{k-1}\rangle$ intersect trivially. In this case, if $k=d-1$ we also say that $\mathcal{P}$ has the {\em FAP with respect to its facets\/}.

Dually, $\mathcal{P}$ is said to have the {\em FAP with respect to the co-$k$-faces\/} if the product in (\ref{nkminus}) is semi-direct; that is,
\[\Gamma(\mathcal{P}) \cong N_{k}^{-} \rtimes \langle\rho_{k+1},\ldots,\rho_{d-1}\rangle,\]
or, equivalently, $N_{k}^{-}$ and $\langle\rho_{k+1},\ldots,\rho_{n-1}\rangle$ have trivial intersection. In this case, if $k=0$ or $k=1$, we say that $\mathcal{P}$ has the {\em FAP with respect to its vertex-figures\/} or {\em edge-figures\/}, respectively. Then a regular $d$-polytope $\mathcal{P}$ has the FAP with respect to its co-$k$-faces if and only if its dual $\mathcal{P}^{*}$ has the FAP with respect to its $(d-k-1)$-faces.  (Recall that the dual $\mathcal{P}^{*}$ of $\mathcal{P}$ is the polytope obtained from $\mathcal{P}$ by leaving the face-set of $\mathcal{P}$ unchanged, but reversing the partial order on $\mathcal{P}$.)

We often make use of the fact that if a regular $d$-polytope $\mathcal{P}$ has the
FAP with respect to its $k$-faces, then the $j$-faces of $\mathcal{P}$ with $j>k$ also have the FAP with respect to their $k$-faces, and the co-$j$-faces of $\mathcal{P}$ with $j<k$ have the FAP with respect to their $(k-j-1)$-faces. A similar, dual statement holds for polytopes that have the FAP with respect to their co-$k$-faces.

Lemma~\ref{lemher} below will show that the FAP is also hereditary in another sense. The proof requires the following simple technical lemma.

\begin{lemma}
\label{lemABC}
Let $G$ be a group, and let $A,B,C$ be subgroups of $G$ such that $G=AB$, $A\cap B=C\cap B=1$, $B\unlhd G$, $C\unlhd A$, and $CB\unlhd AB$. Then 
\[ G/CB = AB/CB \cong A/C.\]
\end{lemma}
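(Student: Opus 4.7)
My plan is to apply the first isomorphism theorem directly to a suitable map. First I would note that $G/CB = AB/CB$ follows immediately from the hypothesis $G = AB$, so the real content is the isomorphism $AB/CB \cong A/C$. To produce it, I would consider the homomorphism $\phi\colon A \to AB/CB$ obtained by restricting the natural projection $AB \to AB/CB$ to $A$. The normality assumptions $CB \unlhd AB$ and $C \unlhd A$ are exactly what make the codomain and $A/C$ groups in the first place and ensure that $\phi$ is a well-defined homomorphism.

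Surjectivity of $\phi$ is immediate: any coset in $AB/CB$ has the form $abCB$ with $a\in A$, $b\in B$, and this equals $aCB = \phi(a)$ because $b \in B \subseteq CB$. The only real work is identifying $\ker\phi = A \cap CB$ and showing it equals $C$. The inclusion $C \subseteq A \cap CB$ is clear from $C \leq A$ and $C \leq CB$. For the reverse, I would take $a \in A \cap CB$, write $a = cb$ with $c \in C \subseteq A$ and $b \in B$, and then observe that $b = c^{-1}a$ lies in $A$, so $b \in A \cap B = 1$; this forces $a = c \in C$. Incidentally, the hypothesis $C \cap B = 1$ is redundant, since $C \leq A$ already gives $C \cap B \subseteq A \cap B = 1$; the authors presumably state it for symmetry. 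Applying the first isomorphism theorem then yields $A/C \cong \mathrm{im}\,\phi = AB/CB$, which is the desired conclusion.

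I do not anticipate any serious obstacle: the lemma is essentially a variant of the second isomorphism theorem adapted to the situation where $A$ is not assumed to be normal in $G$, and the trivial intersection conditions $A \cap B = 1$ and $C \cap B = 1$ precisely fill in for the ingredient that would otherwise be missing when we compute $A \cap CB$.
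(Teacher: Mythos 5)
Your proof is correct, and it takes a different route from the paper's. The paper never computes a kernel directly: it quotients by $B$ first, using the second isomorphism theorem twice to get $AB/B \cong A/(A\cap B) \cong A$ and $CB/B \cong C/(C\cap B) \cong C$, and then applies the third isomorphism theorem, $AB/CB \cong (AB/B)/(CB/B) \cong A/C$. This is where the hypothesis $C\cap B = 1$ is used explicitly, whereas you correctly observe it is redundant given $C\leq A$ and $A\cap B=1$. Your argument instead restricts the projection $AB \to AB/CB$ to $A$ and identifies the kernel $A\cap CB$ with $C$ via the computation $a = cb \Rightarrow b \in A\cap B = 1$; this is slightly more self-contained, since the paper's final step tacitly requires checking that the isomorphism $A \cong AB/B$ carries $C$ onto $CB/B$, a detail your approach avoids. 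Both proofs use exactly the stated hypotheses in essentially the same places ($CB \unlhd AB$ and $C \unlhd A$ to form the quotients, $A\cap B = 1$ to kill the $B$-part), so the difference is one of packaging rather than substance, but yours is the more direct first-isomorphism-theorem argument.
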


\begin{proof} 
By the isomorphism theorems for groups,
\[(AB)/B \cong A/(A\cap B) \cong A,\;\; (CB)/B \cong C/(C\cap B) \cong C\] 
and therefore
\[G/CB = AB/CB \cong (AB/B)/(CB/B)\cong A/C.\]
\end{proof}
\smallskip

The first part of the following lemma will be used in the proof of Theorem~\ref{flat44thm}. The second part is the corresponding dual statement.

\begin{lemma} 
\label{lemher}
Let $\mathcal{P}$ be a regular $d$-polytope, $d\geq 4$. \\[.03in]
(a) Let $0\leq k\leq d-3$ and $0\leq l\leq d-k-4$. If $\mathcal{P}$ has the FAP with respect to its co-$k$-faces, and the co-$k$-faces have the $FAP$ with respect to their co-$l$-faces, then $\mathcal{P}$ has the FAP with respect to its co-$(k+l+1)$-faces.\\[.03in]
(b) Let $2\leq l<k\leq d-1$. If $\mathcal{P}$ has the FAP with respect to its $k$-faces, and the $k$-faces have the $FAP$ with respect to their $l$-faces, then $\mathcal{P}$ has the FAP with respect its $l$-faces.
\end{lemma}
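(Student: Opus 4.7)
The plan is to reduce both parts to Lemma~\ref{lemABC}, treating part (a) in detail; part (b) is completely analogous with the roles of ``$+$'' and ``$-$'' interchanged. For (a), I set $G:=\Gamma(\mathcal{P})$, $A:=\langle\rho_{k+1},\ldots,\rho_{d-1}\rangle$ (the group of the co-$k$-face of the base flag), $B:=N_{k}^{-}$, and $C:=M_{l}$, where $M_{l}$ denotes the normal closure of $\{\rho_{k+1},\ldots,\rho_{k+l+1}\}$ in $A$; this $M_l$ is precisely ``$N_l^-$'' of the co-$k$-face regarded as a $(d-k-1)$-polytope in its own right. The FAP of $\mathcal{P}$ with respect to its co-$k$-faces supplies $G=AB$ and $A\cap B=1$; $B\trianglelefteq G$ and $C\trianglelefteq A$ are immediate from being normal closures, and $C\cap B\subseteq A\cap B=1$.

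The heart of the argument is to verify $CB\trianglelefteq G$. An arbitrary $\gamma\in G$ decomposes as $\gamma=n\alpha$ with $n\in N_k^-$ and $\alpha\in A$; since $M_l\trianglelefteq A$, this gives $\gamma M_l\gamma^{-1}=nM_ln^{-1}$. For $n\in N_k^-$ and $m\in M_l$, the identity $nmn^{-1}=m\cdot(m^{-1}nmn^{-1})$, combined with the normality of $N_k^-$ in $G$, puts $nmn^{-1}$ in $M_l\cdot N_k^-=CB$. Hence $CB\trianglelefteq G$, and Lemma~\ref{lemABC} yields $G/CB\cong A/C$.

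Next I identify $CB$ with $N_{k+l+1}^-$: the inclusion $CB\subseteq N_{k+l+1}^-$ is immediate from the definitions, while $N_{k+l+1}^-\subseteq CB$ follows because $CB$ is now known to be a normal subgroup of $G$ containing all of $\rho_0,\ldots,\rho_{k+l+1}$, and $N_{k+l+1}^-$ is the smallest such. The FAP of the co-$k$-face with respect to its co-$l$-faces yields $A/C\cong\langle\rho_{k+l+2},\ldots,\rho_{d-1}\rangle$, so the inclusion $\langle\rho_{k+l+2},\ldots,\rho_{d-1}\rangle\hookrightarrow G$ composed with the projection $G\to G/N_{k+l+1}^-\cong A/C$ is an isomorphism. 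In particular $N_{k+l+1}^-\cap \langle\rho_{k+l+2},\ldots,\rho_{d-1}\rangle=1$, and together with~(\ref{nkminus}) this is the FAP claimed in (a). Part (b) is obtained mutatis mutandis, taking $A:=\langle\rho_0,\ldots,\rho_{k-1}\rangle$, $B:=N_k^+$, and $C$ the normal closure of $\{\rho_l,\ldots,\rho_{k-1}\}$ in $A$, and using~(\ref{nkplus}) in place of~(\ref{nkminus}) at the final step.

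The main obstacle throughout is the normality of $CB$ in $G$: $B$ is normal in $G$, but $C$ is only normal in $A$, so $G$-invariance of the product $CB$ is not automatic. The commutator manipulation together with the normality of $B$ in $G$ is exactly what makes the product closure behave; once $CB$ is normal, the identification $CB=N_{k+l+1}^-$ and the final trivial-intersection conclusion follow formally.
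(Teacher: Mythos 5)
Your proof is correct, and it follows the paper's skeleton: the same three subgroups $A=\langle\rho_{k+1},\ldots,\rho_{d-1}\rangle$, $B=N_{k}^{-}$, $C=\hat{N}_{k+1,\ldots,k+l+1}$ are fed into Lemma~\ref{lemABC}, and the FAP is read off from $\Gamma(\mathcal{P})/N_{k+l+1}^{-}\cong A/C\cong\langle\rho_{k+l+2},\ldots,\rho_{d-1}\rangle$. The genuine difference is how the central identity $CB=N_{k+l+1}^{-}$ (equivalently, $CB\unlhd\Gamma(\mathcal{P})$) is obtained: you prove normality of $CB$ directly via the commutator manipulation $nmn^{-1}=m\,(m^{-1}nmn^{-1})\in CB$ together with $C\unlhd A$ and $B\unlhd G$, and then identify $CB$ with $N_{k+l+1}^{-}$ by minimality of the normal closure; the paper instead computes $N_{k+l+1}^{-}/N_{k}^{-}$ explicitly on generators, using the rewriting observation at the start of Section~\ref{FAP} to describe $N_{k+l+1}^{-}$ and $\hat{N}_{k+1,\ldots,k+l+1}$ through conjugates by elements of $\langle\rho_{k+l+2},\ldots,\rho_{d-1}\rangle$. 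Your route is more elementary (it needs nothing beyond normality of the closures and the product decomposition (\ref{nkminus})), and it is slightly more careful at the end, since tracking the natural maps gives the trivial intersection $N_{k+l+1}^{-}\cap\langle\rho_{k+l+2},\ldots,\rho_{d-1}\rangle=1$ rather than merely an abstract isomorphism of quotients; what the paper's computation buys is the explicit statement (\ref{herNs}), i.e.\ the semidirect decomposition $N_{k+l+1}^{-}\cong N_{k}^{-}\rtimes\hat{N}_{k+1,\ldots,k+l+1}$, which is reused later in the proof of Theorem~\ref{flat44thm} (see (\ref{n01n1hat}) and (\ref{theNs})). Finally, for part (b) you rerun the argument with $A=\langle\rho_{0},\ldots,\rho_{k-1}\rangle$, $B=N_{k}^{+}$ and (\ref{nkplus}), which works, whereas the paper simply invokes duality; both are legitimate.
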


\begin{proof}
Let $\Gamma(\mathcal{P}) = \langle\rho_{0},\ldots,\rho_{d-1}\rangle$. As the two parts are related by duality, it suffices to prove the first part. 

The group of the co-$k$-face at the base $k$-face of $\mathcal{P}$ is given by $\langle\rho_{k+1},\ldots,\rho_{d-1}\rangle$. Then our assumptions on $\mathcal{P}$ and its co-$k$-faces imply that 
\[\Gamma(\mathcal{P}) \cong N_{k}^{-} \rtimes \langle\rho_{k+1},\ldots,\rho_{d-1}\rangle\]
and
\[\langle\rho_{k+1},\ldots,\rho_{d-1}\rangle =
\hat{N}_{k+1,\ldots,k+l+1} \rtimes \langle\rho_{k+l+2},\ldots,\rho_{d-1}\rangle,\]
where $\hat{N}_{k+1,\ldots,k+l+1}$ is defined as the normal closure of $\{\rho_{k+1},\ldots,\rho_{k+l+1}\}$ in the subgroup $\langle\rho_{k+1},\ldots,\rho_{d-1}\rangle$. We claim that 
\begin{equation}
\label{herNs}
N_{k+l+1}^{-} = N_{k}^{-}\hat{N}_{k+1,\ldots,k+l+1},
\end{equation}
or equivalently, $N_{k+l+1}^{-}/N_{k}^{-}\cong \hat{N}_{k+1,\ldots,k+l+1}$.
In fact, by the FAP of $\mathcal{P}$ with respect to its co-$k$-faces, $N_k^{-}$ intersects the subgroup $\hat{N}_{k+1,\ldots,k+l+1}$  of $\langle\rho_{k+1},\ldots,\rho_{k+l+1}\rangle$ 
trivially and therefore, 
\[\begin{array}{rcl}
N_{k+l+1}^{-}/ N_{k}^{-}&= &
\langle\rho_{0},\ldots,\rho_{k+l},\varphi\rho_{k+l+1}\varphi^{-1}\!\mid\! \varphi\in\langle\rho_{k+l+2},\ldots,\rho_{d-1}\rangle\rangle/N_{k}^{-}\\
&=&
\langle\rho_{0}N_{k}^{-},\ldots,\rho_{k+l}N_{k}^{-},\varphi\rho_{k+l+1}\varphi^{-1}N_{k}^{-}\mid \varphi\in\langle\rho_{k+l+2},\ldots,\rho_{d-1}\rangle\rangle\\
&=&
\langle\rho_{k+1}N_{k}^{-},\ldots,\rho_{k+l}N_{k}^{-},\varphi\rho_{k+l+1}\varphi^{-1}N_{k}^{-}\mid \varphi\in\langle\rho_{k+l+2},\ldots,\rho_{d-1}\rangle\rangle\\
&=&
\langle\rho_{k+1},\ldots,\rho_{k+l},\varphi\rho_{k+l+1}\varphi^{-1}\mid \varphi\in\langle\rho_{k+l+2},\ldots,\rho_{d-1}\rangle\rangle N_{k}^{-}/N_{k}^{-}\\
&=&\hat{N}_{k+1,\ldots,k+l+1} N_{k}^{-}/N_{k}^{-}\\
&=&\hat{N}_{k+1,\ldots,k+l+1}/(\hat{N}_{k+1,\ldots,k+l+1}\cap N_{k}^{-})\\
&\cong&\hat{N}_{k+1,\ldots,k+l+1}.
\end{array}\]
Thus, $N_{k+l+1}^{-} = N_{k}^{-}\hat{N}_{k+1,\ldots,k+l+1}$.

We now apply Lemma~\ref{lemABC} to complete the proof. More explicitly, since the co-$k$-faces have the FAP with respect to their co-$l$-faces,
\[\begin{array}{rcl}
\Gamma(\mathcal{P})/N_{k+l+1}^{-}
&=&\langle\rho_{k+1},\ldots,\rho_{d-1}\rangle N_{k}^{-} / \hat{N}_{k+1,\ldots,k+l+1}N_{k}^{-} \\
&=&\langle\rho_{k+1},\ldots,\rho_{d-1}\rangle / \hat{N}_{k+1,\ldots,k+l+1}\\
&\cong&\langle\rho_{k+l+2},\ldots,\rho_{d-1}\rangle.
\end{array}\]
Thus $\mathcal{P}$ has the FAP with respect to its co-$(k+l+1)$-faces. 
\end{proof}
\medskip

The flat amalgamation property can be employed to construct flat regular polytopes $\mathcal{P}$ with preassigned faces $\mathcal{P}_{1}$ and co-faces $\mathcal{P}_{2}$.
Suppose $\mathcal{P}_{1}$ is a regular $c$-polytope and $\mathcal{P}_{2}$ a regular $d$-polytope such that the co-$k$-faces $\mathcal{K}$ of $\mathcal{P}_{1}$ are isomorphic to the $(c-k-1)$-faces of $\mathcal{P}_{2}$. Let $\Gamma(\mathcal{P}_{1}) = \langle\alpha_{0},\ldots,\alpha_{c-1}\rangle$, and $\Gamma(\mathcal{P}_{2}) = \langle\beta_{k+1},\ldots,\beta_{k+d}\rangle$, where the subscripts of the standard generators of $\Gamma(\mathcal{P}_{2})$ have been shifted by $k+1$. As in \cite[4F5]{McMSch2002}, consider the subgroup 
$\Gamma := \Gamma(\mathcal{P}_{1},\mathcal{P}_{2}) := \langle\rho_0,\ldots,\rho_{k+d}\rangle$ of the direct product $\Gamma(\mathcal{P}_{1}) \times \Gamma(\mathcal{P}_{2})$ where the generators $\rho_i$ are defined by
\begin{equation}
\label{flatamalgen}
\rho_{i} := \left\{  
\begin{array}{lcl}
(\alpha_{i},1) & \mbox{if} & 0 \leq i \leq k,  \\
(\alpha_{i},\beta_{i}) & \mbox{if} & k+1 \leq i \leq c-1,  \\
(1,\beta_{i}) & \mbox{if} & c \leq i \leq k+d.
\end{array}  \right.
\end{equation}
Then $\Gamma$ is called the $(k+1)$-{\it mix\/} of the groups $\Gamma(\mathcal{P}_{1})$ and $\Gamma(\mathcal{P}_{2})$, denoted $\Gamma(\mathcal{P}_{1})\!\diamond_{k+1}\!\Gamma(\mathcal{P}_{2})$. If $\Gamma$ happens to be a C-group (as in the situation described below), then the corresponding regular polytope is called the $(k+1)$-mix of the polytopes $\mathcal{P}_1$ and $\mathcal{P}_2$ denoted $\mathcal{P}_{1}\!\diamond_{k+1}\!\mathcal{P}_{2}$.
The specific nature of the generators immediately shows that $N_{k}^{-}$ and $N_{c}^{+}$ lie in $\Gamma(\mathcal{P}_{1})\times\{1\}$ or $\{1\}\times\Gamma(\mathcal{P}_{2})$, respectively, and thus commute at the level of elements.

The following theorem summarizes the properties of this mix construction (see \cite[Thm. 4F9]{McMSch2002}). 

\begin{theorem}
\label{amalprops}
Let $c,d \geq 2$, and let $0 \leq k \leq c-2$ and $k \geq c-d$.  Let $\mathcal{P}_{1}$
be a regular $c$-polytope with group $\Gamma(\mathcal{P}_{1}) = \langle\alpha_{0},\ldots,\alpha_{c-1}\rangle$, and let $\mathcal{P}_{2}$ be a regular $d$-polytope with $\Gamma(\mathcal{P}_{2}) = \langle\beta_{k+1},\ldots,\beta_{k+d}\rangle$ (with subscripts shifted by $k+1$). Suppose that the co-$k$-faces $\mathcal{K}$ of $\mathcal{P}_{1}$ are isomorphic to the $(c-k-1)$-faces of $\mathcal{P}_{2}$, and that $\mathcal{P}_{1}$ has the FAP with respect to its co-$k$-faces and $\mathcal{P}_{2}$ has the FAP with respect to its $(c-k-1)$-faces. Then the $(k+1)$-mix $\Gamma = \Gamma(\mathcal{P}_{1},\mathcal{P}_{2})$ of $\Gamma(\mathcal{P}_{1})$ and $\Gamma(\mathcal{P}_{2})$ is a string C-group of rank $k+d+1$. Let $\mathcal{P} := \mathcal{P}_{1} \diamond_{k+1} \mathcal{P}_{2}$ be the corresponding regular $(k+d+1)$-polytope with group $\Gamma(\mathcal{P}) = \Gamma$. Then $\mathcal{P}$ has the following properties:\\[.02in]
(a)\, The $c$-faces of $\mathcal{P}$ are isomorphic to $\mathcal{P}_{1}$, and the
co-$k$-faces are isomorphic to $\mathcal{P}_{2}$.  More generally, if $r \leq k$, \ 
$s \geq c$, \ $s-r \geq 4$, and $\{F_{-1},F_0,\ldots,F_{k+d+1}\}$ denotes the base flag of $\mathcal{P}$, then the section $F_{s}/F_{r}$ of $\mathcal{P}$ is
isomorphic to the regular $(s-r-1)$-polytope $\mathcal{Q} := \mathcal{Q}_{1} \diamond_{k-r}\mathcal{Q}_{2}$ which is constructed from the co-$r$-face $\mathcal{Q}_{1}$ of $\mathcal{P}_{1}$ and $(s-k-1)$-face $\mathcal{Q}_{2}$ of $\mathcal{P}_{2}$ in the same way as $\mathcal{P}$ is from $\mathcal{P}_{1}$ and $\mathcal{P}_{2}$.\\[.02in]
(b)\, $\mathcal{P}$ has the FAP with respect to both its co-$k$-faces and its
$c$-faces.  In particular,
\[  \Gamma(\mathcal{P}) \cong 
N_{k}^{-} \rtimes \Gamma(\mathcal{P}_{2}) \cong
N_{c}^{+} \rtimes \Gamma(\mathcal{P}_{1}) \cong 
(N_{k}^{-} \times N_{c}^{+}) \rtimes \Gamma(\mathcal{K}),  \]
where $N_{k}^{-}$ and $N_{c}^{+}$ are the normal closures of 
$\{\alpha_{0},\ldots,\alpha_{k}\}$ in $\Gamma(\mathcal{P}_{1})$ and 
$\{\beta_{c},\ldots,\beta_{k+d}\}$ in $\Gamma(\mathcal{P}_{2})$, respectively.\\[.02in]
(c)\, $\mathcal{P}$ is $(k,c)$-flat.
\end{theorem}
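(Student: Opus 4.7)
The plan is to realize $\Gamma := \Gamma(\mathcal{P}_1,\mathcal{P}_2)$ as the internal semidirect product $(N_k^{-}\times N_c^{+})\rtimes\Gamma(\mathcal{K})$, and to read off all three parts of the theorem from this single structural identification. First I would verify that $\rho_0,\ldots,\rho_{k+d}$ satisfy the Coxeter-type commutation relations $(\rho_i\rho_j)^2=1$ for $|i-j|\geq 2$. When $i\leq k$ and $j\geq c$, the two generators live in complementary factors of $\Gamma(\mathcal{P}_1)\times\Gamma(\mathcal{P}_2)$ and commute automatically. All remaining cases reduce componentwise to the corresponding commutation relations in $\Gamma(\mathcal{P}_1)$ or $\Gamma(\mathcal{P}_2)$, using that $\rho_{k+1},\ldots,\rho_{c-1}$ act diagonally through the fixed identification of the co-$k$-face of $\mathcal{P}_1$ with the $(c-k-1)$-face of $\mathcal{P}_2$.

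The core step is the decomposition $\Gamma\cong(N_k^{-}\times N_c^{+})\rtimes\Gamma(\mathcal{K})$, with $\Gamma(\mathcal{K})$ embedded diagonally as $\langle\rho_{k+1},\ldots,\rho_{c-1}\rangle$. Since conjugation in $\Gamma$ preserves components, $N_k^{-}$ sits inside $\Gamma(\mathcal{P}_1)\times\{1\}$ and maps isomorphically onto the normal closure of $\{\alpha_0,\ldots,\alpha_k\}$ in $\Gamma(\mathcal{P}_1)$, and dually for $N_c^{+}$; in particular, the two normal closures commute elementwise and meet trivially. The FAP hypotheses now give the semidirect decompositions $\Gamma(\mathcal{P}_1)\cong N_k^{-}\rtimes\Gamma(\mathcal{K})$ and $\Gamma(\mathcal{P}_2)\cong N_c^{+}\rtimes\Gamma(\mathcal{K})$, and using these I would build a homomorphism $\Phi$ from the abstract semidirect product onto $\Gamma$ sending the three factors to $N_k^{-}$, $N_c^{+}$, and the diagonal copy of $\Gamma(\mathcal{K})$. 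Well-definedness follows from the FAP-based presentations of $\Gamma(\mathcal{P}_1)$ and $\Gamma(\mathcal{P}_2)$. For injectivity, a kernel element $(n_1,n_2,\gamma)$ must have $n_1\gamma_{\alpha}=1$ in $\Gamma(\mathcal{P}_1)$ and $n_2\gamma_{\beta}=1$ in $\Gamma(\mathcal{P}_2)$, where $\gamma_{\alpha},\gamma_{\beta}$ are the $\alpha$- and $\beta$-realizations of $\gamma$. The FAP of $\mathcal{P}_1$ gives $N_k^{-}\cap\langle\alpha_{k+1},\ldots,\alpha_{c-1}\rangle=1$, forcing $n_1=\gamma_{\alpha}=1$, and dually $n_2=\gamma_{\beta}=1$ via the FAP of $\mathcal{P}_2$, so $\gamma=1$.

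Once this decomposition is in hand, the rest is essentially bookkeeping. For (a), the first-coordinate projection restricts to an isomorphism $\langle\rho_0,\ldots,\rho_{c-1}\rangle\cong\Gamma(\mathcal{P}_1)$ sending distinguished generators to distinguished generators, identifying the $c$-faces of $\mathcal{P}$ with $\mathcal{P}_1$; the dual argument handles the co-$k$-faces. The general section $F_s/F_r$ with $r\leq k$ and $s\geq c$ is governed by $\langle\rho_{r+1},\ldots,\rho_{s-1}\rangle$, which by the same construction is the $(k-r)$-mix of the co-$r$-face of $\mathcal{P}_1$ and the $(s-k-1)$-face of $\mathcal{P}_2$; the FAPs needed for these sections are inherited via the hereditary remark preceding Lemma~\ref{lemher}. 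The intersection property for $\Gamma$ then follows subset by subset from the intersection properties of $\Gamma(\mathcal{P}_1)$ and $\Gamma(\mathcal{P}_2)$ together with $N_k^{-}\cap N_c^{+}=1$, so $\Gamma$ is a string C-group and $\mathcal{P}$ is a polytope. Part (b) is immediate, since $N_k^{-}$ and $\langle\rho_{k+1},\ldots,\rho_{k+d}\rangle$ are complementary factors of the semidirect product, giving the FAP with respect to co-$k$-faces, and dually for $c$-faces. For (c), any element of $\Gamma$ factors as $n_1n_2\gamma$ with $n_1\in N_k^{-}\subseteq\langle\rho_0,\ldots,\rho_{c-1}\rangle$ and $n_2\gamma\in\langle\rho_{k+1},\ldots,\rho_{k+d}\rangle$, yielding $\Gamma=\langle\rho_0,\ldots,\rho_{c-1}\rangle\cdot\langle\rho_{k+1},\ldots,\rho_{k+d}\rangle$, which is the standard group-theoretic criterion for every $k$-face of $\mathcal{P}$ to be incident with every $c$-face.

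The main obstacle is the structural isomorphism in the second paragraph. Both FAP hypotheses are essential: one controls the $\Gamma(\mathcal{P}_1)$-coordinate and the other the $\Gamma(\mathcal{P}_2)$-coordinate, and they must be compatible so that the diagonally embedded $\Gamma(\mathcal{K})$ glues the two semidirect structures consistently. Without either FAP the homomorphism $\Phi$ would fail to be injective, and $\Gamma$ might not even be a string C-group.
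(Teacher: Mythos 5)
You should first note that the paper does not actually prove this statement: it is quoted verbatim from McMullen--Schulte \cite[Thm.\ 4F9]{McMSch2002}, so there is no in-paper argument to compare against and your attempt has to be judged on its own. On that basis, your structural skeleton is sound: the identification of $N_k^{-}$ and $N_c^{+}$ with subgroups of $\Gamma(\mathcal{P}_1)\times\{1\}$ and $\{1\}\times\Gamma(\mathcal{P}_2)$, the internal decomposition $\Gamma\cong (N_k^{-}\times N_c^{+})\rtimes\Gamma(\mathcal{K})$ with $\Gamma(\mathcal{K})$ sitting diagonally as $\langle\rho_{k+1},\ldots,\rho_{c-1}\rangle$, and the way (b) and (c) fall out of it are all correct and are exactly how the result is understood (the paper itself records the commuting of $N_k^{-}$ and $N_c^{+}$ just before the theorem). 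Equivalently, both FAP retractions exhibit $\Gamma$ as the fibered product $\Gamma(\mathcal{P}_1)\times_{\Gamma(\mathcal{K})}\Gamma(\mathcal{P}_2)$, which is also what makes $\langle\rho_0,\ldots,\rho_{c-1}\rangle\cong\Gamma(\mathcal{P}_1)$ and $\langle\rho_{k+1},\ldots,\rho_{k+d}\rangle\cong\Gamma(\mathcal{P}_2)$ in (a).

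The genuine gap is the sentence disposing of the intersection property. That $\Gamma$ is a string C-group is the main assertion of the theorem (everything in (a)--(c) presupposes that $\mathcal{P}$ exists), and it does not ``follow subset by subset from the intersection properties of $\Gamma(\mathcal{P}_1)$ and $\Gamma(\mathcal{P}_2)$ together with $N_k^{-}\cap N_c^{+}=1$'' as stated. To run a subset-by-subset argument you would first need to know, for every $I\subseteq\{0,\ldots,k+d\}$, that $\langle\rho_i\mid i\in I\rangle$ is the full fibered product of $\langle\alpha_i\mid i\in I,\,i\le c-1\rangle$ and $\langle\beta_i\mid i\in I,\,i\ge k+1\rangle$ over the corresponding subgroup of $\Gamma(\mathcal{K})$; proving this requires controlling intersections such as $N_k^{-}\cap\langle\alpha_i\mid i\in I,\,i\le c-1\rangle$, i.e.\ FAP-type statements for the section subgroups of $\mathcal{P}_1$ and $\mathcal{P}_2$, of the kind established by explicit computation in the paper's Lemma~\ref{lemher} (and by the heredity remark preceding it). The triviality of $N_k^{-}\cap N_c^{+}$ gives none of this; indeed, without the two FAP hypotheses the mix of two string C-groups can fail (\ref{intprop}) outright, so this is precisely where the hypotheses carry the load and where the real proof in \cite[Sect.\ 4F]{McMSch2002} does its work. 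As written, your argument establishes the group-theoretic decomposition but leaves the polytopality of $\mathcal{P}$ essentially asserted rather than proved.
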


\section{Toroidal polyhedra of type $\{4,4\}$}
\label{44type}

The toroidal regular polyhedra $\{4,4\}_{(s,t)}$, with $s\geq 2$, $t=0$ or $s=t\geq 2$, have been extensively studied (see Coxeter \& Moser~\cite[Ch. 8]{CoxMos1980}). 
They can be derived from the square tessellation of the plane with vertex set $\mathbb{Z}^2$ as the quotient by the lattices spanned by the vectors $(s,t)$, $(-t,s)$, respectively. Here we briefly discuss the polyhedra $\{4,4\}_{(s,t)}$ whose automorphism groups are 2-groups.

\begin{lemma}
\label{lem44}
The regular polyhedra of type $\{4,4\}$ with 2-groups as automorphism groups are the polyhedra $\{4,4\}_{(2^e,0)}$ and $\{4,4\}_{(2^e,2^e)}$, $e\geq 1$, with group orders $2^{2e+3}$ and $2^{2e+4}$, respectively. 
\end{lemma}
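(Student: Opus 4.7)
The plan is to combine the classification of regular toroidal $\{4,4\}$ polyhedra recalled at the beginning of the section with the classical order formula
\[ |\Gamma(\{4,4\}_{(s,t)})| = 8(s^{2}+t^{2}), \]
as given in Coxeter--Moser~\cite[Ch.~8]{CoxMos1980}. Once both ingredients are available, the statement reduces to a one-variable divisibility check in each of the two parameter regimes $t=0$ and $s=t$.

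First I would dispose of the case $t=0$. Here the group order is $8s^{2}$, which is a power of $2$ if and only if $s$ is a power of $2$. Combined with the standing restriction $s\geq 2$, this forces $s=2^{e}$ for some $e\geq 1$ and gives $|\Gamma(\{4,4\}_{(2^{e},0)})|=2^{2e+3}$. For the case $s=t\geq 2$, the group order becomes $8(s^{2}+s^{2})=16s^{2}$; the same elementary observation forces $s=2^{e}$ with $e\geq 1$ and yields $|\Gamma(\{4,4\}_{(2^{e},2^{e})})|=2^{2e+4}$.

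Conversely, for every $e\geq 1$ the polyhedra $\{4,4\}_{(2^{e},0)}$ and $\{4,4\}_{(2^{e},2^{e})}$ are genuine regular polyhedra by the same reference, and the order computations above confirm that their automorphism groups are $2$-groups of the claimed orders.

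There is no real obstacle here. The only point worth flagging, should one wish to avoid citing \cite{CoxMos1980} for the order formula, is a short direct verification: the action of the automorphism group of any regular polyhedron on its flag set is regular, and each of the $s^{2}+t^{2}$ square faces of $\{4,4\}_{(s,t)}$ carries exactly $8$ flags, so $|\Gamma(\{4,4\}_{(s,t)})|=8(s^{2}+t^{2})$. With that in hand, the lemma is immediate.
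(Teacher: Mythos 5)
Your proposal is correct and follows essentially the same route as the paper: both rest on the classification of the regular $\{4,4\}$ toroids together with the order formula $|\Gamma(\{4,4\}_{(s,t)})|=8(s^{2}+t^{2})$, reducing the lemma to elementary $2$-power arithmetic. The only cosmetic difference is that you check the two parameter families $t=0$ and $s=t$ separately, whereas the paper phrases the same arithmetic as the (mod $4$) uniqueness of writing a power of $2$ as a sum of two squares.
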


\begin{proof}
A regular polyhedron $\{4,4\}_{(s,t)}$, with $s\geq 2$, $t=0$ or $s=t\geq 2$, has an automorphism group of order $8(s^2+t^2)$. For this group to be 2-group, $s^2+t^2$ must be a power of 2. Using simple arithmetic modulo 4, it is not difficult to see that each power of 2 can be written in exactly one way as a sum of two squares of non-negative numbers, up to interchanging the two summands:\ if $k=2e$ is even, then $2^k = (2^e)^2 +0^2$, and if $k=2e+1$ is odd, then $2^k = (2^e)^2 +(2^e)^2$. Thus, if the order of the group is $2^n$, $n\geq 4$, the equation $2^{n-3}=s^2+t^2$ gives the parameters $(s,t)=(2^e,0),(2^e,2^e)$ with $n=2e+3,\,2e+4$, respectively. This proves the lemma.
\end{proof}

For $n\geq 5$, we let $\{4,4\}^{(n)}$ denote the unique regular polyhedron of type $\{4,4\}$ with an automorphism group of order $2^n$. Thus, 
\begin{equation} 
\{4,4\}^{(n)}=\{4,4\}_{(2^e,0)}\mbox{ or }\{4,4\}_{(2^e,2^e)},
\end{equation} 
according as $n$ is odd, $n-3=2e$, or $n$ is even, $n-4=2e$. We similarly use the notation $[4,4]^{(n)}$, $[4,4]_{(2^e,0)}$ or $[4,4]_{(2^e,2^e)}$ for the respective automorphism groups. Each regular polyhedra $\{4,4\}^{(n)}$ is self-dual and has a polarity $\omega$ (duality of order 2) that fixes the base flag. If we let $[4,4]^{(n)} = \langle\alpha_0,\alpha_1,\alpha_2\rangle$, then conjugation by $\omega$ in the extended group (of all automorphisms and dualities) swaps the generators $\alpha_0$ and $\alpha_2$ while keeping the generator $\alpha_1$ fixed.

The polyhedra $\{4,4\}^{(n)}$ all have the flat amalgamation property with respect to both, the vertex-figures and the facets~\cite{Sch1988}. In particular, if $N(\alpha_0)$ denotes the normal closure of $\alpha_0$ in $[4,4]^{(n)}$, then $N(\alpha_0)$ has order $2^{n-3}$ and
\begin{equation}
\label{torfap1}
[4,4]^{(n)} = N(\alpha_0)\rtimes\langle\alpha_1,\alpha_2\rangle 
\cong N(\alpha_0) \rtimes D_4 .
\end{equation}
Similarly, 
\begin{equation}
\label{torfap2}
[4,4]^{(n)} = N(\alpha_2)\rtimes\langle\alpha_0,\alpha_1\rangle 
\cong N(\alpha_2)\rtimes D_4 ,
\end{equation}
where $N(\alpha_2)$ denotes the normal closure of $\alpha_2$ in $[4,4]^{(n)}$.
\medskip

\section{Regular polytopes with 2-groups}
\label{regpols2}
\medskip

In this section, we construct flat regular $d$-polytopes $\mathcal P$ whose automorphism groups are finite 2-groups and whose successive sections of rank 3 as in (\ref{rk3sections}) are toroidal maps of the kind described in the previous section. Any such polytope $\mathcal P$ can be assigned the generalized Schl\"afli symbol
\begin{equation}
\label{all4stypes}
\{\{4,4\}^{(n_3)},\{4,4\}^{(n_4)},\ldots,\{4,4\}^{(n_{d})}\},
\end{equation}
where here the toroidal polyhedron $\{4,4\}^{(n_j)}$ (whose automorphism group has order $2^{n_j}$) gives the isomorphism type of the section 
$F_{j}/F_{j-4}$ of $\mathcal P$ for $j=3,\ldots,d$.  We will show that any sequence of numbers $n_3,\ldots,n_d$, with $n_{j}\geq 5$ for $j=3,\ldots,d$, can be preassigned. 
\smallskip

Our result is based on the construction technique for flat regular polytopes described in Theorem~\ref{amalprops}. There are several ways in which this method can be employed to obtain polytopes of the desired type (\ref{all4stypes}) with automorphism groups that are 2-groups. The polytopes derived here have relatively small 2-groups as automorphism groups.

\begin{theorem}
\label{flat44thm}
Let $d\geq 3$, and let $n_3,\ldots,n_d\geq 5$. Then there exists a finite regular $d$-polytope $\mathcal P$ with the following properties:\\[.02in]
(a) $\mathcal P$ is of type $\{\{4,4\}^{(n_3)},\{4,4\}^{(n_4)},\ldots,\{4,4\}^{(n_{d})}\}$.\\[.02in]
(b) The automorphism group of $\mathcal P$ is a 2-group of order 
$2^{n_3+n_4+\ldots+n_d - 3(d-3)}$.\\[.02in]
(c) $\mathcal P$ is flat if $d\geq 4$; in fact, $\mathcal P$ is $(d-4,d-1)$-flat if $d\geq 4$.\\[.02in]
(d) $\mathcal P$ has the FAP with respect to its co-$(d-4)$-faces, its co-$(d-3)$-faces, and its facets.
\end{theorem}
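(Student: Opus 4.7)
The plan is to proceed by induction on $d$, attaching one toroidal rank-$3$ section at a time via the mix construction of Theorem~\ref{amalprops}. The base case $d=3$ is $\mathcal P:=\{4,4\}^{(n_3)}$ from Section~\ref{44type}:\ parts (a) and (b) are immediate (the exponent $n_3+\cdots+n_d-3(d-3)$ collapses to $n_3$), (c) is vacuous, and in (d) the FAP with respect to co-$(-1)$-faces is the trivial decomposition $\Gamma(\mathcal P)=1\rtimes\Gamma(\mathcal P)$, while the FAPs with respect to vertex-figures and facets are exactly (\ref{torfap1}) and (\ref{torfap2}).

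For the inductive step, assume the theorem holds for a regular $d$-polytope $\mathcal P'$ of type $\{\{4,4\}^{(n_3)},\ldots,\{4,4\}^{(n_d)}\}$, and apply Theorem~\ref{amalprops} with $\mathcal P_1:=\mathcal P'$, $c:=d$, $\mathcal P_2:=\{4,4\}^{(n_{d+1})}$, and $k:=d-3$, producing the regular $(d+1)$-polytope $\mathcal P:=\mathcal P'\diamond_{d-2}\{4,4\}^{(n_{d+1})}$. The required input is easy to verify:\ the co-$(d-3)$-faces of $\mathcal P'$ are rank-$2$ sections of the toroidal section $\{4,4\}^{(n_d)}$, hence squares, matching the $2$-faces (facets) of $\{4,4\}^{(n_{d+1})}$; the FAP of $\mathcal P'$ with respect to its co-$(d-3)$-faces is part of the inductive hypothesis; and the FAP of $\{4,4\}^{(n_{d+1})}$ with respect to its facets is (\ref{torfap2}).

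The four properties for $\mathcal P$ now follow almost mechanically. Part~(a) follows because Theorem~\ref{amalprops}(a) identifies the facets of $\mathcal P$ with $\mathcal P'$ and the co-$(d-3)$-faces with $\{4,4\}^{(n_{d+1})}$, so the sections $F_j/F_{j-4}$ with $j\leq d$ are inherited from the facet (via the inductive hypothesis) and $F_{d+1}/F_{d-3}$ is the new co-$(d-3)$-face. Part~(c) is exactly the $(k,c)=(d-3,d)$-flatness of Theorem~\ref{amalprops}(c). For~(b), the decomposition $\Gamma(\mathcal P)\cong(N^-_{d-3}\times N^+_d)\rtimes\Gamma(\{4\})$ from Theorem~\ref{amalprops}(b), together with the FAPs used in the construction, gives $|N^-_{d-3}|=|\Gamma(\mathcal P')|/8$ and $|N^+_d|=2^{n_{d+1}-3}$; multiplying by $|\Gamma(\{4\})|=8$ yields the desired order $2^{n_3+\cdots+n_{d+1}-3(d-2)}$. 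For~(d), the FAPs with respect to facets and co-$(d-3)$-faces come directly from Theorem~\ref{amalprops}(b), while the FAP with respect to co-$(d-2)$-faces follows by applying Lemma~\ref{lemher}(a) with $k=d-3$ and $l=0$, since $\{4,4\}^{(n_{d+1})}$ has the FAP with respect to its vertex-figures by (\ref{torfap1}).

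The main obstacle is bookkeeping:\ keeping straight the rank shifts, the identification of a co-face of the new polytope with a co-face of a co-face from the previous step, and the re-indexing of the generators of $\mathcal P_2$ built into the mix, so that the hypotheses of Theorem~\ref{amalprops} and Lemma~\ref{lemher} line up at each step with the correct system of normal closures. Once these indices are tracked carefully, no genuinely new ingredient beyond the FAPs of the toroidal maps and the hereditary nature of the FAP expressed in Lemma~\ref{lemher} is needed.
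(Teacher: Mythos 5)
Your proposal is correct and follows essentially the same route as the paper: induction on rank, attaching $\{4,4\}^{(n_{d+1})}$ via the flat amalgamation of Theorem~\ref{amalprops} (using the inductively known FAP with respect to the co-$(d-3)$-faces together with (\ref{torfap1})--(\ref{torfap2})), reading off the type, flatness, and group order from the semidirect-product decomposition, and obtaining the remaining FAP from Lemma~\ref{lemher}(a). The only differences are cosmetic (index shift $d\to d+1$ versus $d-1\to d$, and computing the order from the three-factor decomposition rather than from $N_{d-1}\rtimes\Gamma(\mathcal{P}_{d-1})$).
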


\begin{proof}
We use Theorem~\ref{amalprops} and exploit the fact that the toroidal maps $\{4,4\}^{(n_j)}$ all have the flat amalgamation property with respect to both, the facets and the vertex-figures. 

Our proof is by induction on the rank $d$. For each dimension $d\geq 3$, we construct a regular polytope ${\mathcal P}_d$ with the desired properties. For $d=3$ there is nothing to prove: ${\mathcal P}_{3}=\{4,4\}^{(n_3)}$. For $d\geq 4$, the polytope ${\mathcal P}_{d}$ will be derived by amalgamating ${\mathcal P}_{d-1}$ and $\{4,4\}^{(n_d)}$ along the $2$-sections which are the co-$(d-4)$-face of ${\mathcal P}_{d-1}$ and the $2$-face of $\{4,4\}^{(n_d)}$, respectively. 

It is instructive to first consider the case $d=4$. Set ${\mathcal Q}_1:=\{4,4\}^{(n_3)}$ and 
${\mathcal Q}_2:=\{4,4\}^{(n_4)}$. Then ${\mathcal Q}_1$ has the FAP with respect to its vertex-figures, and ${\mathcal Q}_2$ has the FAP with respect to its 2-faces. In particular, if we let $\Gamma(\mathcal{Q}_1)=\langle\alpha_0,\alpha_1,\alpha_2\rangle$ and $\Gamma(\mathcal{Q}_2)=\langle\beta_1,\beta_2,\beta_3\rangle$, then 
\[\Gamma(\mathcal{Q}_1)=N(\alpha_0)\rtimes\langle\alpha_1,\alpha_2\rangle,\quad
\Gamma(\mathcal{Q}_2)=N(\beta_3)\rtimes\langle\beta_1,\beta_2\rangle,\]
where $\langle\alpha_1,\alpha_2\rangle\cong\langle\beta_1,\beta_2\rangle\cong D_4$ and $N(\alpha_0)$ and $N(\beta_3)$ are the normal closures of $\alpha_0$ in 
$\Gamma(\mathcal{Q}_1)$ and $\beta_3$ in $\Gamma(\mathcal{Q}_2)$, respectively. Here, the subgroups $N(\alpha_0)$ and $N(\beta_3)$ have orders $2^{n_{3}-3}$ and $2^{n_{4}-3}$, respectively. Now, by Theorem~\ref{amalprops} above (with $k=0$), there exists a $(0,3)$-flat regular 4-polytope, denoted $\mathcal{P}_4$,  such that the facets and vertex-figures of ${\mathcal P}_4$ are isomorphic to ${\mathcal Q}_1$ and ${\mathcal Q}_2$, respectively. Thus ${\mathcal P}_4$ is of the desired type $\{\{4,4\}^{(n_3)},\{4,4\}^{(n_4)}\}$. If we let $\Gamma({\mathcal P}_4)=\langle\rho_0,\ldots,\rho_3\rangle$, with generators $\rho_i$ as in (\ref{flatamalgen}), and let $N_{0}\,(\cong N(\alpha_0))$ and $N_{3}\,(\cong N(\beta_3))$ denote the normal closures of $\rho_0$ and $\rho_3$ in the facet subgroup $\langle\rho_0,\rho_1,\rho_2\rangle$ and vertex-figure subgroup $\langle\rho_1,\rho_2,\rho_3\rangle$, respectively, then $N_{0}$ and $N_{3}$ are also the normal closures of $\rho_0$ and $\rho_3$ in the full group $\Gamma({\mathcal P}_4)$ and
\[\Gamma({\mathcal P}_4)
\cong N_0\rtimes\Gamma(\mathcal{Q}_2)
\cong N_3\rtimes\Gamma(\mathcal{Q}_1)
\cong (N_0\times N_3)\rtimes D_4 ,\]
where the dihedral group $D_4$ is given by $\langle\rho_1,\rho_2\rangle$. Moreover, ${\mathcal P}_4$ has the FAP with respect to its vertex-figures and its facets. Note that $\Gamma({\mathcal P}_4)$ has order
\[2^{(n_{3}-3)+(n_{4}-3)+3}=2^{n_3+n_4-3}=2^{n_3+n_4-3(d-3)}.\]

In order for us to be able to proceed to the next rank via the outlined amalgamation process, we must verify that $\mathcal{P}_4$ also has the FAP with respect to its edge-figures. But this now follows from Lemma~\ref{lemher}(a). In fact, since $\mathcal{P}_4$ has the FAP with respect to its vertex-figures, and the vertex-figures of $\mathcal{P}_4$ have the FAP with respect to their vertex-figures, Lemma~\ref{lemher}(a) shows that $\mathcal{P}_4$ also has the FAP with respect to its edge-figures. In particular, 
\[\Gamma({\mathcal P}_4) = N_{0,1} \langle\rho_2,\rho_3\rangle 
\cong N_{0,1}\rtimes \langle\rho_2,\rho_3\rangle,\]
where $N_{0,1}$ is the normal closure of $\{\rho_0,\rho_1\}$ in $\Gamma({\mathcal P}_4)$. In this context, (\ref{herNs}) takes the form 
\begin{equation}
\label{n01n1hat}
N_{0,1}=N_0 \,\hat{N}_{1} \cong N_0 \rtimes \hat{N}_{1},
\end{equation}
where $\hat{N}_{1}$ is the normal closure of $\rho_1$ in the vertex-figure subgroup $\langle\rho_1,\rho_2,\rho_3\rangle$.

Note that the semidirect product in (\ref{n01n1hat}) is not a direct product; the elements $\rho_{1}\rho_{0}\rho_{1}$ of $N_0$ and $\rho_{2}\rho_{1}\rho_{2}$ of $\hat{N}_{1}$ do not commute.
In fact, if we assume the contrary and keep in mind that $\rho_1\rho_2$ has order 4, then the equation
$\rho_{2}\rho_{1}\rho_{2}\cdot\rho_{1}\rho_{0}\rho_{1} \cdot\rho_{2}\rho_{1}\rho_{2}
=\rho_{1}\rho_{0}\rho_{1}$ simplifies to $\rho_{2}\rho_{1}\rho_{2}\rho_{0}\rho_{2}\rho_{1}\rho_{2}=\rho_0$ and therefore $(\rho_1\rho_0)^2=1$, contradicting the order of $\rho_0 \rho_1$. 

For the inductive step of the construction, let $d\geq 5$ and suppose there exists a regular $(d-1)$-polytope ${\mathcal P}_{d-1}$ with the properties listed in the theorem. Set ${\mathcal Q}_1:=\mathcal{P}_{d-1}$ and ${\mathcal Q}_2:=\{4,4\}^{(n_d)}$. Then ${\mathcal Q}_1$ has the FAP with respect to its co-$(d-4)$-face, and ${\mathcal Q}_2$ has the FAP with respect to its $2$-faces. Now, by Theorem~\ref{amalprops} above, there exists a $(d-4,d-1)$-flat regular $d$-polytope, denoted $\mathcal{P}_d$,  such that the facets and co-$(d-4)$-faces of ${\mathcal P}_d$ are isomorphic to ${\mathcal Q}_1$ and ${\mathcal Q}_2$, respectively. If we let $\Gamma({\mathcal P}_d)=\langle\rho_0,\ldots,\rho_{d-1}\rangle$, then 
\[\Gamma({\mathcal P}_d)
\cong N_{0,1,\ldots,d-4}\rtimes\Gamma(\mathcal{Q}_2)
\cong N_{d-1}\rtimes\Gamma(\mathcal{Q}_1)
\cong (N_{0,1,\ldots,d-4}\times N_{d-1})\rtimes D_4 ,\]
where here $N_{0,1,\ldots,d-4}$ denotes the normal closure of $\{\rho_0,\rho_1,\ldots,\rho_{d-4}\}$ in the facet subgroup $\langle\rho_0,\ldots,\rho_{d-2}\rangle$, and $N_{d-1}$ is the normal closure of $\rho_{d-1}$ in the co-$(d-4)$-face subgroup $\langle\rho_{d-3},\rho_{d-2},\rho_{d-1}\rangle$. These subgroups $N_{0,1,\ldots,d-4}$ and $N_{d-1}$ are also the normal closures of $\{\rho_0,\rho_1,\ldots,\rho_{d-4}\}$ and $\rho_{d-1}$ in the full group $\Gamma({\mathcal P}_d)$, respectively. As $\mathcal{Q}_{1}=\mathcal{P}_{d-1}$, the order of the automorphism group is given by 
\[|\Gamma({\mathcal P}_d)| = |N_{d-1}|\cdot |\Gamma(\mathcal{P}_{d-1})|
=2^{n_{d}-3}\cdot 2^{n_3+\ldots +n_{d-1} -3(d-4)} = 2^{n_3+\ldots +n_{d} -3(d-3)}.\]
By Theorem~\ref{amalprops}, ${\mathcal P}_d$ also has the FAP with respect to both, its co-$(d-4)$-faces and its facets.

It remains to prove that ${\mathcal P}_d$ also has the FAP with respect to its co-$(d-3)$-faces. We can proceed as for rank 4 and appeal to Lemma~\ref{lemher}. Then it follows that $\mathcal{P}_d$ has the FAP with respect to its co-$(d-3)$-faces and 
\[\Gamma({\mathcal P}_d) = N_{0,\ldots,d-3}\, \langle\rho_{d-2},\rho_{d-1}\rangle 
\cong N_{0,\ldots,d-3}\rtimes \langle\rho_{d-2},\rho_{d-1}\rangle,\]
where $N_{0,\ldots,d-3}$ is the normal closure of $\{\rho_0,\ldots,\rho_{d-3}\}$ in $\Gamma({\mathcal P}_d)$. Here, (\ref{herNs}) takes the form 
\begin{equation}
\label{theNs}
N_{0,\ldots,d-3}=N_{0,\ldots,d-4} \,\hat{N}_{d-3}\cong N_{0,\ldots,d-4} \rtimes \hat{N}_{d-3},
\end{equation}
where $\hat{N}_{d-3}$ is the normal closure of $\rho_{d-3}$ in the co-$(d-4)$-face subgroup $\langle\rho_{d-3},\rho_{d-2},\rho_{d-1}\rangle$.
Note that the semidirect product in (\ref{theNs}) is not direct, as the elements $\rho_{d-3}\rho_{d-4}\rho_{d-3}$ of $N_{0,\ldots,d-4}$ and $\rho_{d-2}\rho_{d-3}\rho_{d-2}$ of $\hat{N}_{d-3}$ do not commute, for similar reasons as before.

Thus the regular $d$-polytope ${\mathcal P}_d$ has the desired properties, and the inductive proof of the theorem is complete.
\end{proof}
\smallskip

\begin{remark}
The group of the regular $d$-polytope $\mathcal{P}_d$ of Theorem~\ref{flat44thm} is an ``iterated semi-direct product." Recall that equation (\ref{theNs}) in the above proof immediately gives
\[ N_{0,\ldots,d-3}=N_{0,\ldots,d-4} \rtimes \hat{N}_{d-3} \;\; (d\geq 4).\]
Iterating this down in rank and using (\ref{n01n1hat}), we obtain sequences of semi-direct products, beginning with
\[ N_{0,\ldots,d-3}=(N_{0,\ldots,d-5} \rtimes \hat{N}_{d-4})\rtimes \hat{N}_{d-3},\]
and ending with
\begin{equation}
\label{iteratedN}
N_{0,\ldots,d-3}=((((N_{0}\rtimes \hat{N}_{1})\rtimes \hat{N}_{2})\rtimes\,\ldots\,)\rtimes\hat{N}_{d-4})\rtimes\hat{N}_{d-3}\quad(d\geq 4),
\end{equation}
where $N_0$ is the normal closure of $\rho_0$ in $\langle\rho_{0},\rho_{1},\rho_{2}\rangle$ and each 
$\hat{N}_j$ for $j=1,\ldots,d-3$ is the normal closure of $\rho_{j}$ in the subgroup $\langle\rho_{j},\rho_{j+1},\rho_{j+2}\rangle$.  Here, the factor $N_0$ has order $2^{{n_3}-3}$ and each factor $\hat{N}_{j}$ has order $2^{n_{j+3}-3}$. Thus $N_{0,\ldots,d-3}$ is an iterated semi-direct product of order $2^{n_{3}+\ldots+n_{d} -3(d-2)}$. Further, since  
$\Gamma(\mathcal{P}_d)\cong N_{0,\ldots,d-3}\rtimes D_4$ with $D_4\cong\langle\rho_{d-2},\rho_{d-1}\rangle$, we also have the iterated semidirect product decomposition for the full group, 
\begin{equation}
\label{itergroup}
\Gamma(\mathcal{P}_d)
\cong (((((N_{0}\rtimes \hat{N}_{1})\rtimes \hat{N}_{2})\rtimes\,\ldots\,)\rtimes\hat{N}_{d-4})\rtimes\hat{N}_{d-3})\rtimes D_4\quad (d\geq 3),
\end{equation}
thus confirming $2^{n_{3}+\ldots+n_{d} -3(d-3)}$ as the order of $\Gamma(\mathcal{P}_d)$. Note here that we could have replaced $N_0$ by $\hat{N}_0$, with $\hat{N}_0$ defined just like $\hat{N}_j$ with $j=0$.
\end{remark}
\medskip

When $d=4$ some of the smaller polytopes $\mathcal{P}$ of Theorem~\ref{flat44thm} are actually universal, that is, isomorphic to a regular polytope $\{\{4,4\}_{(p,q)},\{4,4\}_{(r,s)}\}$ (see \cite[p.~371]{McMSch2002}). For example, this is the case when the facet $\mathcal{P}_1$ or vertex-figure $\mathcal{P}_2$ is flat, that is, $(p,q)=(2,0)$ or $(r,s)=(2,0)$, respectively. In this case, up to duality, the corresponding polytopes $\mathcal{P}$ are 
\[ \{\{4,4\}_{(2,0)},\{4,4\}_{(2^e,0)}\},\;\; e\geq 1,\] 
with group $(C_2\times C_2)\rtimes [4,4]_{(2,0)}$ of order 128 for $e=1$ and 
$(D_{2^{e-1}}\times D_{2^{e-1}})\rtimes [4,4]_{(2,0)}$ of order $2^{2e+5}$ for $e\geq 2$; as well as 
\[\{\{4,4\}_{(2,0)},\{4,4\}_{(2^e,2^e)}\},\;\; e\geq 1,\] 
with group $(D_{2^e}\times D_{2^e}\times C_2\times C_2)\rtimes (C_2\times C_2)$ of order $2^{2e+6}$. Note that when $n_{3}=n_{4}=6$, the polytope $\mathcal{P}$ of Theorem~\ref{flat44thm} is not universal; in this case the universal polytope
\[ \{\{4,4\}_{(2,2)},\{4,4\}_{(2,2)}\}\] 
is a 2-fold covering of $\mathcal{P}$, with group $C_2^4\rtimes [4,4]_{(2,2)}$ of order 1024, twice the order of the group of $\mathcal{P}$.
\smallskip

	Now let us determine a presentation for the automorphism groups of the polytopes of Theorem \ref{flat44thm}. 
	
	\begin{theorem}
	The automorphism group of the polytope $\calP$ of type 
	\[ \{ \{4,4\}^{(n_3)}, \ldots, \{4,4\}^{(n_d)} \} \]	
	guaranteed by Theorem \ref{flat44thm} is the quotient of the universal polytope of that type by the extra relators
	\[ [(\rho_0 \rho_1)^2, (\rho_2 \rho_3)^2], \ldots, [(\rho_{d-4} \rho_{d-3})^2, (\rho_{d-2} \rho_{d-1})^2]. \]
	\end{theorem}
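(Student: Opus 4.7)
Let $U$ be the automorphism group of the universal polytope of the stated type, generated by $\rho_0,\dots,\rho_{d-1}$, and let $R\unlhd U$ be the normal closure of the commutators $c_j:=[(\rho_j\rho_{j+1})^2,(\rho_{j+2}\rho_{j+3})^2]$ for $0\leq j\leq d-4$. Universality provides a surjection $\pi:U\twoheadrightarrow\Gamma(\calP)$, and the theorem is equivalent to $\ker\pi=R$.

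The inclusion $R\subseteq\ker\pi$ is local. For each $j$, the rank-$4$ section $F_{j+4}/F_{j-1}$ of $\calP$ has group $\langle\rho_j,\dots,\rho_{j+3}\rangle$, and iterated application of Theorem~\ref{amalprops}(a) to the inductive construction of $\calP$ shows that this section is isomorphic to the $1$-mix $\{4,4\}^{(n_{j+3})}\diamond_1\{4,4\}^{(n_{j+4})}$. Theorem~\ref{amalprops}(b) decomposes its group as $(N\times M)\rtimes D_4$ with $N,M$ centralizing each other, where $N$ is the normal closure of the first generator in the facet and $M$ is the normal closure of the last generator in the vertex-figure. Since $(\rho_j\rho_{j+1})^2=\rho_j(\rho_{j+1}\rho_j\rho_{j+1})\in N$ and $(\rho_{j+2}\rho_{j+3})^2=(\rho_{j+2}\rho_{j+3}\rho_{j+2})\rho_{j+3}\in M$, the commutator $c_j$ vanishes in $\Gamma(\calP)$.

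The reverse inclusion $\ker\pi\subseteq R$ I would prove by induction on $d$, showing $|U/R|\leq|\Gamma(\calP)|$. The base $d=3$ is vacuous. Writing $\tilde\rho_i$ for the image of $\rho_i$ in $U/R$, the subgroup $H:=\langle\tilde\rho_0,\dots,\tilde\rho_{d-2}\rangle$ satisfies all defining relations of the universal polytope of type $\{\{4,4\}^{(n_3)},\dots,\{4,4\}^{(n_{d-1})}\}$ together with $c_0,\dots,c_{d-5}$; by the inductive hypothesis it is a quotient of $\Gamma(\calP_{d-1})$, and since $\pi$ surjects it onto the facet subgroup $\Gamma(\calP_{d-1})$ of $\Gamma(\calP)$, we obtain $H\cong\Gamma(\calP_{d-1})$. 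An analogous argument (using only the Coxeter-type and toroidal relations in the subgroup) gives $V:=\langle\tilde\rho_{d-3},\tilde\rho_{d-2},\tilde\rho_{d-1}\rangle\cong[4,4]^{(n_d)}$, so by the FAP~(\ref{torfap2}) the normal closure $\tilde N_{d-1}$ of $\tilde\rho_{d-1}$ in $V$ has order $2^{n_d-3}$. I would then show $\tilde N_{d-1}\unlhd U/R$: generators $\tilde\rho_i$ with $i\leq d-5$ centralize $V$ by the Coxeter-type relations; and for $\tilde\rho_{d-4}$ (which commutes with $\tilde\rho_{d-2},\tilde\rho_{d-1}$ but not with $\tilde\rho_{d-3}$) the relator $c_{d-4}$ becomes $[s^2,b]=1$ with $s:=\tilde\rho_{d-4}\tilde\rho_{d-3}$ and $b:=(\tilde\rho_{d-2}\tilde\rho_{d-1})^2$, which together with $[\tilde\rho_{d-3},\tilde\rho_{d-1}]=1$ suffices, via a direct check on the small $D_4$-orbit of generating conjugates of $\tilde\rho_{d-1}$, to conclude $\tilde\rho_{d-4}\tilde N_{d-1}\tilde\rho_{d-4}\subseteq\tilde N_{d-1}$. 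Granted normality, $U/R=\tilde N_{d-1}\cdot H$ and $\tilde N_{d-1}\cap H\subseteq \tilde N_{d-1}\cap(V\cap H)=\tilde N_{d-1}\cap\langle\tilde\rho_{d-3},\tilde\rho_{d-2}\rangle=1$ by the FAP again, so $|U/R|\leq 2^{n_d-3}\cdot|\Gamma(\calP_{d-1})|=|\Gamma(\calP)|$.

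The principal obstacle is the normality verification of $\tilde N_{d-1}$ in $U/R$---deducing from the single commutation $[s^2,b]=1$ that conjugation by $\tilde\rho_{d-4}$ preserves every conjugate of $\tilde\rho_{d-1}$ under $\langle\tilde\rho_{d-3},\tilde\rho_{d-2}\rangle=D_4$. This is tractable because only four such conjugates are distinct (thanks to $[\tilde\rho_{d-3},\tilde\rho_{d-1}]=1$), but the check must carefully exploit the centrality of $b$ in $\langle\tilde\rho_{d-2},\tilde\rho_{d-1}\rangle$ and of $s^2$ in $\langle\tilde\rho_{d-4},\tilde\rho_{d-3}\rangle$, matching the iterated semi-direct product structure~(\ref{itergroup}) of $\Gamma(\calP)$.
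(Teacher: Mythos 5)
Your argument is correct in outline, but for the crucial converse (bounding the order of the quotient) it takes a genuinely different route from the paper. The first half coincides: the relators $[(\rho_j\rho_{j+1})^2,(\rho_{j+2}\rho_{j+3})^2]$ hold in $\Gamma(\calP)$ because the flat amalgamation puts the two elements into commuting normal closures. For the converse, the paper does not induct on rank; it reduces each parameter $n_i$ to $5$ one covering at a time, identifies each kernel as an explicit subgroup of order $2^{n_i-5}$ generated by two commuting translations (normality checked by a word computation with $\rho_0$ and $\rho_4$), and then pins down $\Gamma(5,\ldots,5)$ as the tight group of order $2^{2d-1}$ using Conder's results \cite{Con2013}, including his lower bound on the number of flags of polytopes of type $\{4,\ldots,4\}$. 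You instead induct on the rank $d$, peel off the last toroidal factor via the normal closure $\tilde N_{d-1}$ of $\tilde\rho_{d-1}$ in $V\cong[4,4]^{(n_d)}$, and bound $|U/R|\le |\tilde N_{d-1}|\cdot|H|\le 2^{n_d-3}|\Gamma(\calP_{d-1})|=|\Gamma(\calP)|$. Your route avoids any appeal to \cite{Con2013} and leans directly on the FAP/semidirect structure from Theorem~\ref{flat44thm}; the paper's route, in exchange, identifies the kernels explicitly and exhibits the tight bottom quotient $\Gamma(5,\ldots,5)$, which is of independent interest.

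Two points so that your sketch closes cleanly. First, the normality check you flag as the principal obstacle does go through, and more easily than you anticipate: with $a=\tilde\rho_{d-4}$, $x=\tilde\rho_{d-3}$, $y=\tilde\rho_{d-2}$, $z=\tilde\rho_{d-1}$, $s=(ax)^2$, $t=(yz)^2$, one computes $a(xyzyx)a=(axa)(yzy)(axa)=s\,(xyzyx)\,s^{-1}$, and $s$ commutes with $xyzyx=(xtx)z$ because $[s,t]=1$, $xsx=s^{-1}$, and $[s,z]=1$ (as $z$ commutes with $a$ and $x$); conjugating by $y$ handles the fourth $D_4$-conjugate, so $a$ in fact centralizes $\tilde N_{d-1}$, while all remaining generators visibly normalize it. Second, your claim that $V\cap H=\langle\tilde\rho_{d-3},\tilde\rho_{d-2}\rangle$ inside $U/R$ is an intersection-property statement you have not justified and cannot assume, since $U/R$ is not yet known to be a C-group; but it is also unnecessary: the inequality $|U/R|\le|\tilde N_{d-1}|\,|H|$ already suffices, or, if you want trivial intersection, note that $\pi$ is injective on $H$ and maps $\tilde N_{d-1}\cap H$ into $N_{d-1}\cap\Gamma(\calP_{d-1})=1$ by the FAP of $\calP$ with respect to its facets.
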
		
	
	\begin{proof}
	Let $\G(n_3, \ldots, n_d)$ be the quotient of 
	\[ \G(\{ \{4,4\}^{(n_3)}, \ldots, \{4, 4\}^{(n_d)} \}) \]
	by the extra relators  $[(\rho_0 \rho_1)^2, (\rho_2 \rho_3)^2], \ldots, [(\rho_{d-4} \rho_{d-3})^2, (\rho_{d-2} \rho_{d-1})^2]$. Consider the 4-faces of $\calP$, which are built by a flat amalgamation of $\{4,4\}^{(n_3)}$ with $\{4,4\}^{(n_4)}$. By construction, $(\rho_0 \rho_1)^2$ commutes with $(\rho_2 \rho_3)^2$ since $(\rho_0 \rho_1)^2 = ( (\alpha_0 \alpha_1)^2, 1)$ and $(\rho_2 \rho_3)^2 = (1, (\beta_2 \beta_3)^2)$. Similarly, since $\calP$ is built up iteratively by flat amalgamations, we see that $(\rho_i \rho_{i+1})^2$ commutes with $(\rho_{i+2} \rho_{i+3})^2$ for each $0 \leq i \leq d-4$. Thus, $\G(\calP)$ is a quotient of $\G(n_3, \ldots, n_d)$. To prove that $\G(\calP)$ is equal to this group, it suffices to show they have the same order.
	
Clearly, $\G(n_3, \ldots, n_d)$ covers 
	\[ \G(n_3, \ldots, n_{i-1}, 5, n_{i+1}, \ldots, n_d). \]
	The kernel $K$ of this covering is the normal closure of $\langle (\rho_{i-3} \rho_{i-2} \rho_{i-1} \rho_{i-2})^2 \rangle$. For simplicity's sake, let us assume $i = 4$, so that $K$ is the normal closure of $\langle (\rho_1 \rho_2 \rho_3 \rho_2)^2 \rangle$. Clearly then, $K$ contains
	\[ H = \langle (\rho_1 \rho_2 \rho_3 \rho_2)^2, (\rho_3 \rho_2 \rho_1 \rho_2)^2 \rangle. \]
	Now, the generators of $H$ correspond to perpendicular translations in the symmetry group of $\{4,4\}^{(n_4)}$, and so they commute and have trivial intersection, implying that $|H| = |(\rho_1 \rho_2 \rho_3 \rho_2)^2| \cdot |(\rho_3 \rho_2 \rho_1 \rho_2)^2| = 2^{n_4-5}$. It is easy to see that $H$ is normalized by $\rho_1$, $\rho_2$, and $\rho_3$. Furthermore, clearly $H$ is normalized by $\rho_j$ with $j \geq 5$. Let us show that $\rho_4$ commutes with the generators of $H$. Note that, since $(\rho_3 \rho_4)^2$ commutes with $(\rho_1 \rho_2)^2$ and $\rho_1$, it follows that $(\rho_3 \rho_4)^2$ commutes with $\rho_2 \rho_1 \rho_2$, and it then follows that $\rho_3 \rho_4 \rho_3$ commutes with $\rho_2 \rho_1 \rho_2$, then, suppressing the $\rho$s and working with subscripts only:
	\begin{align*}
	4~1232~1232~4~2321~2321 &= 4~1232~123~4~321~2321 \\
	&= 4123~212~343~212~321 \\
	&= 4123~343~321 \\
	&= 412421 \\
	&= e	
	\end{align*}
	A similar calculation shows that $\rho_4$ commutes with $\rho_3 \rho_2 \rho_1 \rho_2$, and that $\rho_0$ commutes with both generators of $H$. It follows that $H$ is normal, so $K = H$. Thus, no longer assuming that $i = 4$, we have
	\[ |\G(n_3, \ldots, n_d)| = |\G(n_3, \ldots, n_{i-1}, 5, n_{i+1}, \ldots, n_d)| \cdot |H|	= 2^{n_i-5} |\G(n_3, \ldots, n_{i-1}, 5, n_{i+1}, \ldots, n_d)|. \]
	Then by repeatedly taking quotients until every $n_j$ is 5, we get
	\[ |\G(n_3, \ldots, n_d)| = 2^{n_3 + \cdots + n_d - 5d + 10} |\G(5, \ldots, 5)|. \]
	Now consider $\G(5, \ldots, 5)$. In this group, we have $(\rho_i \rho_{i+1} \rho_{i+2} \rho_{i+1})^2 = 1$ for $0 \leq i \leq d-3$. Then
	\begin{align*}
	\rho_{i+2} (\rho_i \rho_{i+1})^2 \rho_{i+2} (\rho_{i+1} \rho_i)^2 &= \rho_i \rho_{i+2} \rho_{i+1} \rho_i \rho_{i+1} \rho_{i+2} \rho_{i+1} \rho_i \rho_{i+1} \rho_i \\
	&= \rho_i (\rho_{i+2} \rho_{i+1} \rho_i \rho_{i+1})^2 \rho_i \\
	&= 1,
	\end{align*}
	so $(\rho_i \rho_{i+1})^2$ commutes with $\rho_{i+2}$. A similar calculation shows that it commutes with $\rho_{i-1}$ (assuming $i \geq 1$), and it clearly commutes with every other $\rho_j$, so it is central. Then $\G(5, \ldots, 5)$ is a quotient of the group in \cite[Definition 5.1]{Con2013}, which has order $2^{2d-1}$ by \cite[Theorem 5.2]{Con2013}. On the other hand, every $d$-polytope of type $\{4, \ldots, 4\}$ has at least $2^{2d-1}$ flags by \cite[Theorem 3.2]{Con2013}, and so it follows that $\G(5, \ldots, 5)$ is precisely this group of order $2^{2d-1}$. Thus,
	\[ |\G(n_3, \ldots, n_d)| = 2^{n_3 + \cdots + n_d - 5d + 10 + 2d - 1} = 2^{n_3 + \cdots + n_d - 3(d-3)}, \]
	which is the order of $\G(\calP)$.
	\end{proof}

The amalgamation technique of Theorem~\ref{flat44thm} of concatenating regular polyhedra or polytopes as successive sections of regular polytopes of higher rank works in more general contexts than presented. Our focus here is on polytopes whose automorphism groups are 2-groups.

\section{Alternating semiregular polytopes with 2-groups}
\label{semiregpols2}

Semiregular polytopes are also a source of highly symmetric polytopes whose automorphism groups are 2-groups. Recall that an alternating semiregular $d$-polytope $\mathcal S$ has two kinds of regular facets $\mathcal P$ and $\mathcal Q$ occurring in alternating fashion around any $(d-3)$-face in $\mathcal S$. 

Alternating semiregular polytopes are closely related to tail-triangle C-groups (Monson \& Schulte~\cite{MonSch2012,MonSch2020}), that is, C-groups with an underlying tail-triangle diagram of the form 
\medskip
\begin{equation}
\label{tailtrianglegroup}
\begin{picture}(210,38)
\put(100,2){
\multiput(15,0)(45,0){2}{\circle*{5}}
\multiput(100,-27)(0,53.3){2}{\circle*{5}}
\multiput(-130,0)(45,0){2}{\circle*{5}}
\put(-5,0){\line(1,0){20}}
\put(-85,0){\line(1,0){20}}
\put(-130,0){\line(1,0){45}}
\put(15,0){\line(1,0){45}}
\put(60,0){\line(3,2){40}}
\put(60,0){\line(3,-2){40}}
\put(100,-27){\line(0,1){53.3}}
\put(4,9){\scriptsize $\alpha_{d-4}$}
\put(48,9){\scriptsize $\alpha_{d-3}$}
\put(30,-6){\scriptsize $p_{d-3}$}
\put(-134,9){\scriptsize $\alpha_{0}$}
\put(-89,9){\scriptsize $\alpha_{1}$}
\put(-110,-6){\scriptsize $p_1$}
\put(-49,-0.5){$\ldots\ldots$}
\put(68,21){\scriptsize $p_{d-2}$}
\put(68,-22){\scriptsize $q_{d-2}$}
\put(105,-29){\scriptsize $\beta_{d-2}$}
\put(105,25){\scriptsize $\alpha_{d-2}$}
\put(103.5,-3){\scriptsize $k$}}
\end{picture}
\end{equation}\medskip\bigskip

\noindent
Every $d$-generator tail-triangle C-group $\Gamma$ as in (\ref{tailtrianglegroup}) gives rise in a natural way to an alternating semiregular $d$-polytope $\mathcal S$. We will exploit some constructions of \cite{MonSch2012,MonSch2020} to derive polytopes with 2-groups.  

We first look at the case $d=3$, specifically the medial construction. When $d=3$ the tail is missing and the diagram is a triangle, possibly with $k=2$ so that the diagram is linear. 
\bigskip

\noindent
{\it Medials of regular polyhedra}
\medskip

When $d=3$ and $k=2$ the diagram for the C-group $\Gamma$ in (\ref{tailtrianglegroup}) is linear. In this case $\Gamma$ is a string C-group and thus gives rise to a regular polyhedron $\mathcal{K}$. On the other hand, viewing the diagram as a triangle diagram (with $k=2$), the same group $\Gamma$  also determines a semiregular polyhedron $\mathcal S$. In Coxeter's notation~\cite{Cox1973}, $\mathcal S$ is derived from $\Gamma$ by applying Wythoff's construction with the middle node of the (linear) diagram ringed. More explicitly, $\mathcal S$ is just the {\it medial\/} of the polyhedron $\mathcal{K}$ (see \cite{PisSer2013}). The medial $\mathcal S$ of $\mathcal K$ can be drawn on the same surface as $\mathcal K$ (if the faces and vertex-figures of $\mathcal K$ are finite):\ the vertices of $\mathcal S$ are the ``edge midpoints" of $\mathcal K$ (arbitrarily chosen relative interior points of the edges, one point per edge); the edges of $\mathcal S$ connect the edge midpoints on adjacent edges of a face of $\mathcal K$; and the faces of $\mathcal S$ are formed by the edge cycles that either pass through the edge midpoints of the edges of a face of $\mathcal K$ (so that the new face is ``inscribed" in the old), or the edge midpoints on successive edges around a vertex of $\mathcal{K}$ (so that, in a sense, the old vertex is being cut off). For example, if $\mathcal K$ is the cube $\{4,3\}$, then $\mathcal S$ is the cuboctahedron. If $\mathcal{K}$ is of type $\{p,q\}$ (with $p=p_{d-2}$ and $q=q_{d-2}$), then the medial $\mathcal S$ has $p$-gons and $q$-gons as faces, with two of each kind alternating around each vertex of $\mathcal S$. Note that the semiregular polyhedron $\mathcal S$ is regular if and only if $\mathcal K$ is self-dual. In this case, $\Gamma(\mathcal{S})=\Gamma(\mathcal{K})\rtimes C_2$. In all other cases, $\Gamma(\mathcal{S})=\Gamma(\mathcal{K})$.

Via the medial construction, each regular polyhedron $\mathcal K$ with a 2-group as automorphism group gives rise to a semiregular polyhedron $\mathcal S$, with the same automorphism group if $\mathcal{K}$ is not self-dual, but with an automorphism group twice as large as the original group if $\mathcal K$ is self-dual. If $\mathcal{K}$ is of type $\{2^k,2^l\}$, then two  $2^k$-gons and two $2^l$-gons alternate around every vertex of $\mathcal{S}$.
\bigskip

\noindent
{\it Small alternating semiregular polytopes}
\medskip

Recall that $\{4,4\}^{(n)}$ denotes the unique regular polyhedron of type $\{4,4\}$ with a group of order $2^n$. In particular, $\{4,4\}^{(n)}$ is $\{4,4\}_{(2^e,0)}$ or $\{4,4\}_{(2^e,2^e)}$, according as $n$ is odd, $n-3=2e$, or $n$ is even, $n-4=2e$.

Just as for regular polytopes, we can preassign any combination of toroidal maps of type $\{4,4\}^{(n)}$ to occur as the collection of (regular) sections of rank 3 in a small alternating semiregular polytope whose automorphism group is of 2-power order. We again abuse standard Schl\"afli symbol notation and assign any such $d$-polytope $\mathcal S$, $d\geq 4$, the \textit{generalized Schl\"afli symbol}
\begin{equation}
\label{all4stypessemi}
\left\{\!\!\begin{array}{rl}
\{4,4\}^{(n_3)},\ldots,\{4,4\}^{(n_{d-3})},\{4,4\}^{(n_{d-2})},
&\!\!\!\!\!\!\begin{array}{l}
\{4,4\}^{(n_{d-1})}\\[.1in]
\{4,4\}^{(m_{d-1})}
\end{array}
\end{array}\!\!\!\!\!\right\},
\end{equation}
with the convention that when $d=4$ the symbol just reduces to  
\[\left\{\!\!\begin{array}{l}
\{4,4\}^{(n_{d-1})}\\[.1in]
\{4,4\}^{(m_{d-1})}
\end{array}\!\!\!\right\}.\]
Then the two kinds of regular facets $\mathcal P$ and $\mathcal Q$ of $\mathcal{S}$ have symbols 
\[\{\{4,4\}^{(n_3)},\ldots,\{4,4\}^{(n_{d-3})},\{4,4\}^{(n_{d-2})},\{4,4\}^{(n_{d-1})}\}\]
and 
\[\{\{4,4\}^{(n_3)},\ldots,\{4,4\}^{(n_{d-3})},\{4,4\}^{(n_{d-2})},\{4,4\}^{(m_{d-1})}\},\]
respectively, and for $d\geq 5$ meet along common $(d-2)$-faces with the symbol
\[\{\{4,4\}^{(n_3)},\ldots,\{4,4\}^{(n_{d-3})},\{4,4\}^{(n_{d-2})}\}\]
and for $d=4$ along 2-faces, $\{4\}$. 

In an alternating semiregular $d$-polytope $\mathcal S$, all sections of rank $3$, but one, are sections of facets of $\mathcal{S}$ and are represented in the above symbol. The exception is the co-$(d-4)$-faces, which usually are not regular. These are not part of the Schl\"afli symbol and are not preassigned. If $S$ can be obtained from a tail-triangle C-group, then the co-$(d-4)$-faces are medials of regular polyhedra.

The following theorem employs the construction technique for small alternating semiregular polytopes described in \cite[Thm. 5.3]{MonSch2020}, along with the results of Theorem~\ref{flat44thm} above.

\begin{theorem}
\label{44semiregularthm}
Let $d\geq 4$, and let $n_3,\ldots,n_{d-1}\geq 5$ and $m_{d-1}\geq 5$. Then there exists a finite alternating semiregular $d$-polytope $\mathcal P$ with the following properties:\\[.02in]
(a) $\mathcal P$ is of type 
\[\left\{\!\!\begin{array}{rl}
\{4,4\}^{(n_3)},\ldots,\{4,4\}^{(n_{d-3})},\{4,4\}^{(n_{d-2})},
&\!\!\!\!\!\!\begin{array}{l}
\{4,4\}^{(n_{d-1})}\\[.1in]
\{4,4\}^{(m_{d-1})}
\end{array}
\end{array}\!\!\!\!\!\right\},\]
with appropriate interpretation as above if $d=4$.\\[.02in]
(b) The automorphism group of $\mathcal P$ is a 2-group and has order 
\[2^{n_3+n_4+\ldots+n_{d-2} +n_{d-1}+m_{d-1}-3(d-3)}\,\mbox{ if } n_{d-1}\neq m_{d-1}\]
or twice this order if $n_{d-1}=m_{d-1}$.
\end{theorem}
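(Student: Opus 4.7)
The plan is to use Theorem~\ref{flat44thm} to produce two compatible regular $(d-1)$-polytopes and then invoke the tail-triangle amalgamation of \cite[Thm.\,5.3]{MonSch2020} to join them into an alternating semiregular $d$-polytope $\mathcal{S}$ (the polytope the statement calls $\mathcal{P}$).

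First, I would apply Theorem~\ref{flat44thm} twice, at rank $d-1$, with parameter sequences $(n_3,\ldots,n_{d-1})$ and $(n_3,\ldots,n_{d-2},m_{d-1})$, to obtain regular $(d-1)$-polytopes $\mathcal{P}$ and $\mathcal{Q}$ of types
\[\{\{4,4\}^{(n_3)},\ldots,\{4,4\}^{(n_{d-1})}\}\ \text{ and }\ \{\{4,4\}^{(n_3)},\ldots,\{4,4\}^{(n_{d-2})},\{4,4\}^{(m_{d-1})}\},\]
built iteratively so that both carry the same $(d-2)$-polytope $\mathcal{K}$ of type $\{\{4,4\}^{(n_3)},\ldots,\{4,4\}^{(n_{d-2})}\}$ as their facet. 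By Theorem~\ref{flat44thm}(b) and (d), the groups $\Gamma(\mathcal{P})$ and $\Gamma(\mathcal{Q})$ are 2-groups of orders $2^{n_3+\cdots+n_{d-1}-3(d-4)}$ and $2^{n_3+\cdots+n_{d-2}+m_{d-1}-3(d-4)}$, each with the FAP with respect to its facet subgroup $\Gamma(\mathcal{K})$, whose order is $2^{n_3+\cdots+n_{d-2}-3(d-5)}$. When $d=4$ this step is trivial:\ $\mathcal{P}=\{4,4\}^{(n_3)}$, $\mathcal{Q}=\{4,4\}^{(m_3)}$, and $\mathcal{K}=\{4\}$.

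Second, writing $\Gamma(\mathcal{P})=\langle\alpha_0,\ldots,\alpha_{d-2}\rangle$, $\Gamma(\mathcal{Q})=\langle\alpha_0,\ldots,\alpha_{d-3},\beta_{d-2}\rangle$, and $\Gamma(\mathcal{K})=\langle\alpha_0,\ldots,\alpha_{d-3}\rangle$, I would form the tail-triangle group $\Gamma=\langle\alpha_0,\ldots,\alpha_{d-2},\beta_{d-2}\rangle$ subject to the defining relations of $\Gamma(\mathcal{P})$ and $\Gamma(\mathcal{Q})$ together with the relation $(\alpha_{d-2}\beta_{d-2})^2=1$, i.e.\ with parameter $k=2$ on the triangle edge of the diagram in (\ref{tailtrianglegroup}). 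The facet-FAP from Theorem~\ref{flat44thm}(d) is exactly the structural hypothesis required by \cite[Thm.\,5.3]{MonSch2020} to conclude that $\Gamma$ is a genuine tail-triangle C-group and produces an alternating semiregular $d$-polytope $\mathcal{S}$ whose facets, alternating around each $(d-3)$-face, are copies of $\mathcal{P}$ and $\mathcal{Q}$; since these facets were built to have the prescribed generalized Schl\"afli symbols, part~(a) follows.

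For part~(b), the group $\Gamma$ is built from 2-groups by semi-direct products and a commuting pair of involutions, so $\Gamma$ is itself a 2-group, and the amalgamated structure together with the facet-FAP yields
\[|\Gamma|\;=\;\frac{|\Gamma(\mathcal{P})|\cdot|\Gamma(\mathcal{Q})|}{|\Gamma(\mathcal{K})|}\;=\;2^{n_3+\cdots+n_{d-2}+n_{d-1}+m_{d-1}-3(d-3)}.\]
If $n_{d-1}\ne m_{d-1}$, then $\Gamma=\Gamma(\mathcal{S})$ and the claim holds. If $n_{d-1}=m_{d-1}$, then $\mathcal{P}\cong\mathcal{Q}$ and the involution swapping $\alpha_{d-2}\leftrightarrow\beta_{d-2}$ while fixing $\alpha_0,\ldots,\alpha_{d-3}$ extends to a polytope automorphism of $\mathcal{S}$ exchanging the two facet orbits; adjoining it doubles the group order, giving (b) in this case. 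The main obstacle will be the verification that the two facet subgroups inside $\Gamma$ intersect exactly in $\Gamma(\mathcal{K})$, i.e.\ the C-group condition on the tail-triangle diagram; this is precisely what the facet-FAP from Theorem~\ref{flat44thm}(d) is designed to secure via \cite[Thm.\,5.3]{MonSch2020}, after which the order computation and the 2-group property follow at once.
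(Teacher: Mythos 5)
Your proposal follows essentially the same route as the paper's proof: construct $\mathcal{K}$, $\mathcal{P}$, $\mathcal{Q}$ via Theorem~\ref{flat44thm}, use the facet-FAP to invoke \cite[Thm.~5.3]{MonSch2020}, obtain the order from the resulting structure $(N_{d-2}(\mathcal{P})\times N_{d-2}(\mathcal{Q}))\rtimes\Gamma(\mathcal{K})$ (equivalent to your quotient formula $|\Gamma(\mathcal{P})|\,|\Gamma(\mathcal{Q})|/|\Gamma(\mathcal{K})|$), and adjoin a $C_2$ when $n_{d-1}=m_{d-1}$. The only cosmetic difference is that you present $\Gamma$ by generators and the single extra relation $(\alpha_{d-2}\beta_{d-2})^2=1$, whereas the order computation is justified for the group actually constructed in \cite[Thm.~5.3]{MonSch2020} (the semidirect product, in which \emph{all} of $N_{d-2}(\mathcal{P})$ commutes with $N_{d-2}(\mathcal{Q})$), which is what the paper uses.
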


\begin{proof}
By our Theorem~\ref{flat44thm} applied to the common subsequence $n_3,\ldots,,n_{d-2}$ of  the two sequences $n_3,\ldots,n_{d-2},n_{d-1}$ and $n_3,\ldots,n_{d-2},m_{d-1}$, there exists a small regular $(d-2)$-polytope $\mathcal K$ of type  
\[\{\{4,4\}^{(n_3)},\ldots,\{4,4\}^{(n_{d-2})}\},\] 
with the properties listed in Theorem~\ref{flat44thm}. Moreover, again by Theorem~\ref{flat44thm}, now applied to the sequences  $n_3,\ldots,n_{d-2},n_{d-1}$ and $n_3,\ldots,n_{d-2},m_{d-1}$, there also exist small regular $(d-1)$-polytopes $\mathcal P$ and $\mathcal Q$ of types 
\[\{\{4,4\}^{(n_3)},\ldots,\{4,4\}^{(n_{d-3})},\{4,4\}^{(n_{d-2})},\{4,4\}^{(n_{d-1})}\}\]
and 
\[\{\{4,4\}^{(n_3)},\ldots,\{4,4\}^{(n_{d-3})},\{4,4\}^{(n_{d-2})},\{4,4\}^{(m_{d-1})}\},\]
respectively. In particular, by the inductive nature of our construction, $\mathcal{K}$ is a facet of both $\mathcal P$ and $\mathcal Q$; and by part (d) of Theorem~\ref{flat44thm}, both $\mathcal P$ and $\mathcal Q$ also have the FAP with respect to their facets. If $m_{d-1}=n_{d-1}$, then $\mathcal P$ is isomorphic to $\mathcal Q$. 

Now, as $\mathcal P$ and $\mathcal Q$ both have the FAP with respect to their common facet $\mathcal K$, we can appeal to \cite[Thm.~5.3]{MonSch2020} to derive an alternating semiregular $d$-polytope $\mathcal S$ with two copies of $\mathcal P$ and $\mathcal Q$ occurring alternately around each co-$(d-3)$-face of $\mathcal S$. The polytope $\mathcal S$ is obtained from a group $\Gamma$ with generators $\alpha_0,\ldots,\alpha_{d-3},\alpha_{d-2},\beta_{d-2}$ as in (\ref{tailtrianglegroup}), where here $\Gamma(\mathcal{P})$ and $\Gamma(\mathcal{Q})$ are identified with $\langle\alpha_0,\ldots,\alpha_{d-3},\alpha_{d-2}\rangle$ and 
$\langle\alpha_0,\ldots,\alpha_{d-3},\beta_{d-2}\rangle$, respectively. If we let $N_{d-2}(\mathcal{P})$ and $N_{d-2}(\mathcal{Q})$, respectively, denote the normal closures of $\alpha_{d-2}$ and $\beta_{d-2}$ in $\Gamma(\mathcal{P})$ and $\Gamma(\mathcal{Q})$, then 
\[\Gamma(\mathcal{P})\cong N_{d-2}(\mathcal{P})\rtimes \Gamma(\mathcal{K}),\;\;\,
\Gamma(\mathcal{Q})\cong N_{d-2}(\mathcal{Q})\rtimes \Gamma(\mathcal{K}).\]
The tail-triangle C-group $\Gamma$ from which the polytope~$\mathcal S$ is constructed is given by
\[\Gamma\;\,\cong\;\, (N_{d-2}(\mathcal{P}) \times N_{d-2}(\mathcal{Q}))\,\rtimes\,\Gamma(\mathcal{K}).\]
Note that $\mathcal S$ is a regular polytope if and only if $m_{d-1}=n_{d-1}$. In this case, $\Gamma(\mathcal{S})\cong\Gamma\rtimes C_2$. In all other cases, $\Gamma(\mathcal{S})=\Gamma$.

By Theorem~\ref{flat44thm}, $\Gamma(\mathcal{K})$ is a 2-group of order $2^{n_3+n_4+\ldots+n_{d-2} - 3(d-5)}$.  Further, from the proof of Theorem~\ref{flat44thm} we know that $N_{d-2}(\mathcal{P})$ and $N_{d-2}(\mathcal{Q})$ have orders $2^{n_{d-1}-3}$ and 
$2^{m_{d-1}-3}$, respectively. It follows that $\Gamma$ is a 2-group of order 
\[2^{n_3+n_4+\ldots+n_{d-2} +n_{d-1}+m_{d-1}-3(d-3)}.\]
This is also the order of $\Gamma(\mathcal{S})$ if $m_{d-1}\neq n_{d-1}$. However, if $m_{d-1}= n_{d-1}$, the order is twice as big. This completes the proof.
\end{proof}

\begin{remark}
The numbers of vertices and facets of the alternating semiregular $d$-polytope $\mathcal S$ of Theorem~\ref{44semiregularthm} are given by
\[ f_{0}(\mathcal{S}) = \frac{f_{0}(\mathcal{P})f_{0}(\mathcal{Q})}{f_{0}(\mathcal{K})},\;\;\;\, 
f_{d-1}(\mathcal{S}) = f_{d-2}(\mathcal{P})+f_{d-2}(\mathcal{Q}) ,\]
where $f_0$ and $f_{d-1}$ denote the numbers of vertices and facets of the corresponding polytope, respectively. The polytope $\mathcal S$ is not flat in general, unlike in the regular case. Flatness would force all facets to have the same number of vertices, but $\mathcal{P}$ and $\mathcal Q$ may have different numbers. The co-$(d-4)$-faces of $\mathcal{S}$ are isomorphic to $\{4,4\}_{(2,2)}=\{4,4\}^{(6)}$, which is the medial of $\{4,4\}_{(2,0)}=\{4,4\}^{(5)}$, and thus are also regular. 
\end{remark}

\section{Polytopes with infinite 2-groups}
\label{inf2group}

Call a finitely generated infinite group $G$ an {\em infinite 2-group} if every element of $G$ of finite order has a 2-power order. Note that a finitely generated infinite group $G$ is an infinite 2-group if and only if every finite subgroup of $G$ is a finite 2-group. 

Recall that a (regular) {\em extension\/} of a regular $d$-polytope $\mathcal K$ is a regular $(d+1)$-polytope with facets isomorphic to $\mathcal K$. Given a regular $d$-polytope $\mathcal K$ there exists a regular $(d+1)$-polytope, called the {\em free extension} of $\mathcal{K}$, that covers every regular extension of $\mathcal K$ (see \cite[4D4]{McMSch2002}). Our goal is to show that the free extension of a regular $d$-polytope $\mathcal{K}$ that has a finite or an infinite $2$-group, is a regular $(d+1)$-polytope $\mathcal P$ with an infinite 2-group. We exploit the fact that the automorphism group of the free extension is isomorphic to a certain amalgamated free product. 

Suppose $\mathcal K$ is a regular $d$-polytope with group $\Gamma(\mathcal{K})=\langle\rho_{0},\ldots,\rho_{d-1}\rangle$. Let $\mathcal F$ denote the facet of $\mathcal K$ so that $\Gamma(\mathcal{F})=\langle\rho_{0},\ldots,\rho_{d-2}\rangle$, and write $\rho_d$ for the generator of a cyclic group $C_2$. Then the group of $\mathcal P$ is given by the amalgamated free product 
\begin{equation}
\label{amalreg}
\begin{array}{rcl}
\Gamma(\mathcal{P}) &=& 
\Gamma(\mathcal{K})\; {\mathop{\ast}}_{\,\Gamma(\mathcal{F})} \;(\Gamma(\mathcal{F}) \times C_{2}),\\[.05in]
&=&\langle\rho_{0},\ldots,\rho_{d-1}\rangle\;{\mathop{\ast}}_{\langle\rho_{0},\ldots,\rho_{d-2}\rangle}\; \langle\rho_{0},\ldots,\rho_{d-2},\rho_d\rangle,
\end{array}
\end{equation}
where $\rho_d$ generates the factor $C_2$ and, for $i = 0,\ldots,d-2$, the generators $\rho_{i}$ of the two copies of $\Gamma(\mathcal{F})$ in $\Gamma(\mathcal{K})$ and $\Gamma(\mathcal{F}) \times C_{2}$ are identified in the obvious way. 
\smallskip

We now have the following result.

\begin{theorem}
\label{freeextreg}
Let $\mathcal K$ be a regular $d$-polytope whose automorphism group is a finite or an infinite 2-group. Then the free extension $\mathcal P$ of $\mathcal K$ is a regular $(d+1)$-polytope with an infinite 2-group as automorphism group.
\end{theorem}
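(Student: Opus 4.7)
The approach is to exploit the amalgamated free product description of $\Gamma(\mathcal{P})$ recorded in~(\ref{amalreg}),
\[\Gamma(\mathcal{P}) \;=\; \Gamma(\mathcal{K})\,{\mathop{\ast}}_{\,\Gamma(\mathcal{F})}\,(\Gamma(\mathcal{F}) \times C_{2}),\]
together with the classical torsion theorem for amalgamated free products:\ in any amalgamated free product $A\ast_{C}B$, every element of finite order is conjugate to an element of $A$ or of $B$. This is a standard consequence of the normal form theorem (see, e.g., Serre, \emph{Trees}), since a cyclically reduced word of syllable length $\geq 2$ has infinite order. I will also need the companion fact that such an amalgamated product is infinite whenever $C$ is a proper subgroup of both factors, again from the normal form theorem.

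First, I would verify that both factors $\Gamma(\mathcal{K})$ and $\Gamma(\mathcal{F})\times C_{2}$ are $2$-groups in the sense of the paper. The factor $\Gamma(\mathcal{K})$ is so by hypothesis. The facet subgroup $\Gamma(\mathcal{F}) = \langle\rho_{0},\ldots,\rho_{d-2}\rangle$ is a subgroup of $\Gamma(\mathcal{K})$ (via the intersection property~(\ref{intprop})), and so inherits the property that every finite-order element has $2$-power order. Since the order of a finite-order element $(g,c)$ of a direct product $\Gamma(\mathcal{F}) \times C_{2}$ is the least common multiple of the orders of $g$ and $c$, it is again a power of $2$; hence $\Gamma(\mathcal{F}) \times C_{2}$ is a $2$-group as well.

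Second, I would apply the torsion theorem to the amalgamation:\ any element of $\Gamma(\mathcal{P})$ of finite order is conjugate into $\Gamma(\mathcal{K})$ or into $\Gamma(\mathcal{F}) \times C_{2}$, and consequently has $2$-power order. To show $\Gamma(\mathcal{P})$ is infinite, split on whether $\mathcal{K}$ is finite. If $\mathcal{K}$ is infinite, then so is $\Gamma(\mathcal{P})$, which contains $\Gamma(\mathcal{K})$ as an embedded subgroup. If $\mathcal{K}$ is finite, then $\Gamma(\mathcal{F})$ is a proper subgroup of $\Gamma(\mathcal{K})$ (its index equals the number of facets of $\mathcal{K}$, which is at least $2$ by the diamond condition) and also a proper (index $2$) subgroup of $\Gamma(\mathcal{F}) \times C_{2}$, so the amalgamated product is infinite. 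Combining the two conclusions yields that $\Gamma(\mathcal{P})$ is an infinite $2$-group.

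The main obstacle, such as it is, is simply the need to cite (or briefly re-derive from the normal form theorem) the torsion theorem for amalgamated free products, since the rest of the paper works with string C-group presentations rather than with Bass--Serre theory. Apart from that single external input, every step reduces to routine facts about string C-groups, subgroups, and direct products, so the proof should be short once the torsion theorem is in hand.
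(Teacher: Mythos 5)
Your proposal is correct and follows essentially the same route as the paper: both use the amalgamated free product decomposition in~(\ref{amalreg}) and the Torsion Theorem for amalgamated free products to conclude that every finite-order element of $\Gamma(\mathcal{P})$ is conjugate into $\Gamma(\mathcal{K})$ or $\Gamma(\mathcal{F})\times C_2$ and hence has $2$-power order. Your explicit verification that $\Gamma(\mathcal{P})$ is infinite (via $\Gamma(\mathcal{F})$ being proper in both factors, or $\Gamma(\mathcal{K})$ itself being infinite) is a welcome detail that the paper leaves implicit.
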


\begin{proof}
By (\ref{amalreg}), $\Gamma(\mathcal{P})$ is the free product of $\Gamma(\mathcal{K})$ and $\Gamma(\mathcal{F}) \times C_{2}$, amalgamated along their common subgroups $\Gamma(\mathcal{F})$. In particular, $\Gamma(\mathcal{P})$ is generated by $\rho_0,\ldots,\rho_{d-1},\rho_d$. By the Torsion Theorem for amalgamated free products (see \cite{LynSch1977}), each element of $\Gamma(\mathcal{P})$ of finite order is conjugate to an element of finite order in one of the two factors, $\Gamma(\mathcal{K})$ or $\Gamma(\mathcal{F})\times C_2$. Since $\Gamma(\mathcal{K})$ is a finite or an infinite 2-group, its facet subgroup $\Gamma(\mathcal{F})$ and thus $\Gamma(\mathcal{F})\times C_2$ are also finite or infinite 2-groups. It follows that every element of finite order in $\Gamma(\mathcal{P})$ is conjugate to an element of 2-power order in the factors $\Gamma(\mathcal{K})$ or $\Gamma(\mathcal{F})\times C_2$, and thus itself must have 2-power order.
\end{proof}
\medskip

A similar theorem also holds for free extensions of chiral polytopes. Recall that the automorphism group of a chiral $d$-polytope $\mathcal K$ has two flag orbits such that any two adjacent flags are in different orbits. If $\mathcal K$ is chiral, then $\Gamma(\mathcal{K})$ has two sets of distinguished generators $\sigma_1,\ldots,\sigma_{d-1}$ and $\sigma_1^{-1},\sigma_{2}\sigma_{1}^{2},\sigma_{3},\ldots,\sigma_{d-1}$, respectively, the first determined by the base flag and the second by the $0$-adjacent flag of the base flag (see \cite{Pel2025,SchWei1991}). These sets of generators are not conjugate in $\Gamma(\mathcal{K})$ and represent the two enantiomorphic (mirror image) forms of $\mathcal{K}$; in a sense, a {\it left\/} and {\it right\/} version of the same underlying polytope. An {\it oriented\/} chiral polytope consists of a chiral polytope together with one of these two sets of distinguished generators. 

A {\em chiral extension\/} of an oriented chiral $d$-polytope $\mathcal K$ is an oriented chiral $(d+1)$-polytope with (oriented) facets isomorphic to $\mathcal K$. Note that for $\mathcal K$ to admit a chiral extension, the facets $\mathcal F$ of $\mathcal K$ must be directly regular, meaning that their own rotation subgroup $\Gamma^{+}(\mathcal{F})$ has index 2 in their full automorphism group $\Gamma(\mathcal{F})$. It is known that given an oriented chiral $d$-polytope $\mathcal K$ with (necessarily) directly regular facets $\mathcal F$, there exists an oriented chiral $(d+1)$-polytope $\mathcal P$, called the {\em free extension} of $\mathcal{K}$, that covers every chiral extension of $\mathcal K$ (see Schulte \& Weiss~\cite{SchWei1995}). The group of $\mathcal P$ is given by the amalgamated free product 
\begin{equation}
\label{amalchir}
\Gamma(\mathcal{P}) 
\,=\,\Gamma(\mathcal{K})\ {\mathop{\ast}}_{\,\Gamma^{+}(\mathcal{F})} \,\Gamma(\mathcal{F})
\,=\,\Gamma(\mathcal{K})\, {\mathop{\ast}}_{\,\Gamma^{+}(\mathcal{F})} \,
(\Gamma^{+}(\mathcal{F})\rtimes C_2).
\end{equation}

We now have the following analogue of Theorem~\ref{freeextreg}.

\begin{theorem}
\label{freeextchir}
Let $\mathcal K$ be an oriented chiral $d$-polytope with directly regular facets, and suppose the automorphism group of $\mathcal K$ is a finite or an infinite 2-group. Then the free extension $\mathcal P$ of $\mathcal K$ is an oriented chiral $(d+1)$-polytope with an infinite 2-group as automorphism group.
\end{theorem}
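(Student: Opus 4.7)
The plan is to follow the same strategy as in the proof of Theorem~\ref{freeextreg}, using the amalgamated free product description of $\Gamma(\mathcal{P})$ given by (\ref{amalchir}) together with the Torsion Theorem for amalgamated free products from \cite{LynSch1977}. The only substantive difference from the regular case is that the second factor is now $\Gamma^{+}(\mathcal{F})\rtimes C_{2}$ rather than $\Gamma(\mathcal{F})\times C_{2}$, and the amalgamated subgroup is $\Gamma^{+}(\mathcal{F})$ rather than $\Gamma(\mathcal{F})$.

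First I would verify that both factors in the amalgamation are (finite or infinite) 2-groups. By hypothesis this holds for $\Gamma(\mathcal{K})$. Since $\Gamma^{+}(\mathcal{F})=\langle\sigma_{1},\ldots,\sigma_{d-2}\rangle$ is a subgroup of $\Gamma(\mathcal{K})=\langle\sigma_{1},\ldots,\sigma_{d-1}\rangle$, every element of $\Gamma^{+}(\mathcal{F})$ has 2-power order, so $\Gamma^{+}(\mathcal{F})$ is itself a (finite or infinite) 2-group. Then the extension $\Gamma(\mathcal{F})=\Gamma^{+}(\mathcal{F})\rtimes C_{2}$ is a 2-group as well: if $x\in\Gamma(\mathcal{F})$ has finite order $n$, then $x^{2}\in\Gamma^{+}(\mathcal{F})$ has order a power of 2, and hence $n$ is also a power of 2.

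Next I would apply the Torsion Theorem to the amalgamated free product (\ref{amalchir}): every element of finite order in $\Gamma(\mathcal{P})$ is conjugate to an element of finite order in one of the two factors $\Gamma(\mathcal{K})$ or $\Gamma(\mathcal{F})$. By the preceding step, any such element has 2-power order, so every torsion element of $\Gamma(\mathcal{P})$ has 2-power order. Finally, $\Gamma(\mathcal{P})$ is infinite because the amalgamated subgroup $\Gamma^{+}(\mathcal{F})$ is properly contained in each factor (it has index $2$ in $\Gamma(\mathcal{F})$, and is strictly smaller than $\Gamma(\mathcal{K})$ since $\sigma_{d-1}\notin\Gamma^{+}(\mathcal{F})$); general facts about nontrivial amalgamated free products then force $\Gamma(\mathcal{P})$ to be infinite. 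Combining these two observations shows that $\Gamma(\mathcal{P})$ is an infinite 2-group, completing the proof.

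The only delicate point is the verification that $\Gamma(\mathcal{F})$ is a 2-group, since here (unlike in the regular case) the second factor is a genuine semidirect product rather than a direct product, so one must check that no new torsion of non-2-power order is introduced. The short argument given above handles this, and the remainder of the proof is a direct translation of the argument for Theorem~\ref{freeextreg}.
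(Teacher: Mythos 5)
Your proof is correct and follows essentially the same route as the paper: apply the Torsion Theorem for amalgamated free products to the decomposition (\ref{amalchir}), conclude that every torsion element of $\Gamma(\mathcal{P})$ is conjugate into one of the two factors and hence has 2-power order, and note that the nontrivial amalgam is infinite. Your verification that the second factor $\Gamma(\mathcal{F})=\Gamma^{+}(\mathcal{F})\rtimes C_{2}$ is a 2-group (as an index-2 extension of the subgroup $\Gamma^{+}(\mathcal{F})\leq\Gamma(\mathcal{K})$) is in fact a bit more careful than the paper's brief remark that the second factor is isomorphic to a subgroup of $\Gamma(\mathcal{K})$, but the argument is the same in substance.
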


\begin{proof}
We can argue as in the proof of Theorem~\ref{freeextreg}, now using the amalgamated free product in (\ref{amalchir}).  By the Torsion Theorem for amalgamated free products, each element of $\Gamma(\mathcal{P})$ of finite order is conjugate to an element of finite order in one of the two factors, $\Gamma(\mathcal{K})$ or $\Gamma(\mathcal{F})$. Since $\Gamma(\mathcal{K})$ is a finite or an infinite 2-group and the second factor is isomorphic to a subgroup of the first (and thus also is a finite or an infinite 2-group), it follows that every element of finite order in $\Gamma(\mathcal{P})$ is conjugate to an element of 2-power order in the factors $\Gamma(\mathcal{K})$ or $\Gamma(\mathcal{F})$, and thus itself must have 2-power order.
\end{proof}

Note that Theorem~\ref{freeextchir} is an extension theorem for {\it oriented\/} chiral polytopes. The free chiral extensions for the two possible oriented versions of the same underlying chiral polytope may not be isomorphic as polytopes in general. However, if the vertex-figures of $\mathcal K$ are directly regular, then the vertex-figures of the free extensions of Theorem~\ref{freeextchir} are also  directly regular (see~\cite[Thm.~2(e)]{SchWei1995}) and thus the free extensions for the two oriented versions of $\mathcal{K}$ will just coincide with the two enantiomorphic forms of the same underlying chiral $(d+1)$-polytope. For example, this happens if $d=3$, since the vertex-figures of chiral polyhedra are directly regular.

For a discussion of chiral extensions of chiral polytopes we refer to Cunningham \& Pellicer~\cite{CuPe2014} and Pellicer~\cite{Pel2025}.
\smallskip

There also is an analogue to Theorems~\ref{freeextreg} and~\ref{freeextchir} for alternating semiregular polytopes based on the existence of universal polytopes established in~\cite{MonSch2012,MonSch2021}). Suppose $\mathcal{P}$ and $\mathcal{Q}$ are regular $d$-polytopes, each with facets isomorphic to a regular $(d-1)$-polytope $\mathcal{K}$. Then there is a universal alternating semiregular $(d+1)$-polytope $\mathcal{U}$ with facets isomorphic to $\mathcal P$ and $\mathcal Q$. Here, $\mathcal{U}$ is regular if and only if $\mathcal{P}$ and $\mathcal{Q}$ are isomorphic. Let $\Gamma$ denote the free product of $\Gamma(\mathcal{P})$ and $\Gamma(\mathcal{Q})$, amalgamated along their common subgroups $\Gamma(\mathcal{K})$. Then the automorphism group of $\mathcal{U}$ is given by $\Gamma$ if $\mathcal{P}\not\cong\mathcal{Q}$ or $\Gamma\rtimes C_2$ if $\mathcal{P}\cong\mathcal{Q}$.

\begin{theorem}
\label{freeextsemireg}
Let $\mathcal{P}$ and $\mathcal{Q}$ be regular $d$-polytopes, each with facets isomorphic to a regular $(d-1)$-polytope $\mathcal{K}$. Suppose the automorphism groups of $\mathcal{P}$ and $\mathcal{Q}$ are finite or infinite 2-groups. Then the universal alternating semiregular $(d+1)$-polytope $\mathcal{U}$ with facets isomorphic to $\mathcal P$ and $\mathcal Q$ has an automorphism group which is an infinite 2-group.  
\end{theorem}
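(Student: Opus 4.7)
The plan is to mirror the strategy used in the proofs of Theorems~\ref{freeextreg} and~\ref{freeextchir}, namely, to invoke the Torsion Theorem for amalgamated free products applied to the group $\Gamma$ described in the paragraph preceding the theorem. Recall that $\Gamma = \Gamma(\mathcal{P}) \mathop{\ast}_{\Gamma(\mathcal{K})} \Gamma(\mathcal{Q})$, and $\Gamma(\mathcal{U}) = \Gamma$ or $\Gamma(\mathcal{U}) = \Gamma \rtimes C_2$ according as $\mathcal{P} \not\cong \mathcal{Q}$ or $\mathcal{P} \cong \mathcal{Q}$.

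First, I would verify that $\Gamma$ is infinite. Since $\mathcal{K}$ is a facet of both $\mathcal{P}$ and $\mathcal{Q}$, and a regular polytope of rank at least $2$ has more than one facet, $\Gamma(\mathcal{K})$ is a proper subgroup of index at least $2$ in each of $\Gamma(\mathcal{P})$ and $\Gamma(\mathcal{Q})$. This is the standard condition guaranteeing that the amalgamated free product $\Gamma$ is infinite (indeed, it contains non-trivial free subgroups).

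Next, I would apply the Torsion Theorem: every element of finite order in $\Gamma$ is conjugate to an element of finite order in one of the two factors $\Gamma(\mathcal{P})$ or $\Gamma(\mathcal{Q})$. Since both factors are finite or infinite $2$-groups by hypothesis, every element of finite order in either factor has $2$-power order. Because conjugation preserves orders, every element of finite order in $\Gamma$ must then have $2$-power order as well, so $\Gamma$ is an infinite $2$-group. This handles the case $\mathcal{P} \not\cong \mathcal{Q}$ immediately.

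Finally, I would deal with the case $\mathcal{P} \cong \mathcal{Q}$, in which $\Gamma(\mathcal{U}) = \Gamma \rtimes C_2$ and $\Gamma$ has index $2$ in $\Gamma(\mathcal{U})$. For any finite-order $g \in \Gamma(\mathcal{U}) \setminus \Gamma$ we have $g^2 \in \Gamma$, so $g^2$ has $2$-power order (possibly $1$); since $|g|$ divides $2|g^2|$, the order $|g|$ is then also a $2$-power. Combined with the previous paragraph this shows that every finite-order element of $\Gamma(\mathcal{U})$ has $2$-power order, and since $\Gamma(\mathcal{U})$ contains the infinite group $\Gamma$, it is itself infinite. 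I do not anticipate any substantive obstacle: the argument is a direct analogue of the previous two theorems, with the only new bookkeeping being the possible semi-direct extension by $C_2$ when the two facets coincide.
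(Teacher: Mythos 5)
Your proposal is correct and follows essentially the same route as the paper, which simply applies the Torsion Theorem for amalgamated free products to $\Gamma = \Gamma(\mathcal{P}) \mathop{\ast}_{\Gamma(\mathcal{K})} \Gamma(\mathcal{Q})$ exactly as in Theorems~\ref{freeextreg} and~\ref{freeextchir}. Your additional checks (infinitude of $\Gamma$ via properness of $\Gamma(\mathcal{K})$ in both factors, and the $2$-power order of elements outside $\Gamma$ when $\Gamma(\mathcal{U}) \cong \Gamma \rtimes C_2$) are sound details that the paper's one-line proof leaves implicit.
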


The proof works in much the same way as those of Theorems~\ref{freeextreg} and~\ref{freeextchir}, now applying the Torsion Theorem for amalgamated free products to $\Gamma$.

\section{Power polytopes}
\label{powerpols}

The power polytopes $2^{\mathcal K}$ and $2^{{\mathcal K},{\mathcal G}(2^m)}$ described in  \cite{McMSch2002} also have 2-groups as automorphism groups in certain cases.

If $\mathcal K$ is a finite regular $d$-polytope of type $\{2^{k_1},\ldots,2^{k_{d-1}}\}$ with $v$ vertices, then $2^{\mathcal K}$ is a regular $(d+1)$-polytope of type $\{4,2^{k_1},\ldots,2^{k_{d-1}}\}$ with vertex-figures isomorphic to $\mathcal{K}$ and automorphism group $\Gamma(2^{\mathcal K})=C_2^v\rtimes \Gamma({\mathcal K})$~(see \cite[8C2]{McMSch2002}). Thus, if $\Gamma(\mathcal{K})$ has order $2^n$, then $\Gamma(2^{\mathcal K})$ has order $2^{n+v}$.

If $m\geq 3$ and $\mathcal K$ is a centrally symmetric finite regular $d$-polytope of type $\{2^{k_1},\ldots,2^{k_{d-1}}\}$ with $v$ vertices, then $2^{{\mathcal K},{\mathcal G}(2^m)}$ is a regular $(d+1)$-polytope of type $\{4,2^{k_1},\ldots,2^{k_{d-1}}\}$ with vertex-figures isomorphic to $\mathcal{K}$ and automorphism group $\Gamma(2^{{\mathcal K},{\mathcal G}(2^m)})=(D_{2^m})^{v/2}\rtimes \Gamma({\mathcal K})$ (see \cite[8C5]{McMSch2002}). Recall here that a regular polytope is called {\it centrally symmetric\/} if its group contains a {\it proper\/} central involution $\alpha$, with ``proper" meaning that $\alpha$ does not fix a vertex (and thus pairs up the vertices in ``antipodal" pairs); that is, $\alpha$ must not lie in the vertex-figure subgroup $\langle\rho_1,\ldots,\rho_{d-1}\rangle$ of $\Gamma(\mathcal{K})=\langle\rho_0,\ldots,\rho_{d-1}\rangle$. Now, every 2-group has a nontrivial center. If $\Gamma(\mathcal{K})$ is of order $2^n$ and contains a proper central involution which also does not lie in the facet subgroup $\langle\rho_0,\ldots,\rho_{d-1}\rangle$ of $\Gamma(\mathcal{K})$ (so that $\mathcal{K}$ remains in tact as vertex-figure), then $\Gamma(2^{{\mathcal K},{\mathcal G}(2^m)})$ has a group of order $2^{n+(m+1)v/2}$.

\subsection*{Acknowledgment}
We are grateful for the opportunity to meet during Gabe Cunningham's and Egon Schulte visit at Beijing Jiaotong University and Shanxi Normal University, hosted by Yan-Quan Feng and Dong-Dong Hou in the summer of 2024. The present paper resulted from discussions during this visit. The work was supported by the National Natural Science Foundation of China (12201371, 12271024, 12331013, 12301461, 12425111).

\end{document}